\newtheorem{assumption}{Assumption}
\definecolor{light-gray}{gray}{0.95}
\definecolor{LightCyan}{rgb}{0.88,1,1}
\newcommand{\ex}[1]{\mathbb{E} \left[ #1 \right]}
\title[Convergence and Stability of the Stochastic Proximal Point Algorithm with Momentum]{Convergence and Stability of the \\ Stochastic Proximal Point Algorithm with Momentum}
\author{%
 \Name{Junhyung Lyle Kim} \Email{jlylekim@rice.edu}\\
\addr {Department of Computer Science, Rice University}
 \AND
 \Name{Panos Toulis} \Email{panos.toulis@chicagobooth.edu}\\
\addr {Booth School of Business, University of Chicago}
 \AND
 \Name{Anastasios Kyrillidis} \Email{anastasios@rice.edu}\\
\addr {Department of Computer Science, Rice University}
}
\begin{document}

\maketitle

\begin{abstract}%
 Stochastic gradient descent with momentum (SGDM) is the dominant algorithm in many optimization scenarios, including convex optimization instances and non-convex neural network training. Yet, in the stochastic setting, momentum interferes with gradient noise, often leading to specific step size and momentum choices in order to guarantee convergence, set aside acceleration. Proximal point methods, on the other hand, have gained much attention due to their numerical stability and elasticity against imperfect tuning. Their stochastic accelerated variants though have received limited attention: how momentum interacts with the stability of (stochastic) proximal point methods remains largely unstudied. To address this, we focus on the convergence and stability of the stochastic proximal point algorithm with momentum (SPPAM), and show that SPPAM allows a faster linear convergence to a neighborhood compared to the stochastic proximal point algorithm (SPPA) with a better contraction factor, under proper hyperparameter tuning. In terms of stability, we show that SPPAM depends on problem constants more favorably than SGDM, allowing a wider range of step size and momentum that lead to convergence.
\end{abstract}

\begin{keywords}%
  empirical risk minimization, stochastic proximal point algorithm, momentum, stability. 
\end{keywords}

\section{Introduction}
\label{sec:intro}
\paragraph{Background.}
We focus on unconstrained empirical risk minimization instances \citep{robbins_stochastic_1951, polyak1992acceleration, bottou_stochastic_2012, bottou2007tradeoffs, shalev2011pegasos, nemirovski_robust_2009, moulines_non-asymptotic_2011, bach2013non}, as in: 
\vspace{-0.2cm}
\begin{align}
\label{eq:obj}
    \min_{x \in \mathbb{R}^p} ~f(x) = \frac{1}{n} \sum_{i=1}^n f_i(x). 
    \\[-20pt] \nonumber
\end{align}

To solve \eqref{eq:obj}, stochastic gradient descent (SGD) has become the de facto method used by the machine learning community,
mainly due to its computational efficiency \citep{zhang_2004_solving, bottou_stochastic_2012, bottou_optimization_2018}. 
For completeness, SGD iterates as follows: 
\vspace{-0.1cm}
\begin{align}
\label{eq:sgd}
    x_{t+1} = x_t  - \eta \nabla f_{i_t}(x_t), 
    \\[-18pt] \nonumber
\end{align}
where $\eta$ is the step size, and $\nabla f_i$ is the (stochastic) gradient computed at the $i$-th data point.

\paragraph{Properties of SGD and Its Momentum Extension.}
While computationally efficient, stochastic methods often suffer from two major limitations: $(i)$ slow convergence, and $(ii)$ numerical instability.
Due to the gradient noise, SGD could take longer to converge in terms of iterations \citep{moulines_non-asymptotic_2011, gower_sgd_2019}. 
Moreover, SGD suffers from numerical instabilities both in theory \citep{nemirovski_robust_2009} and practice \citep{bottou_stochastic_2012}, allowing only a small range of $\eta$ values that lead to convergence, which often depend on unknown quantities \citep{moulines_non-asymptotic_2011}.

With respect to slow convergence, many variants of accelerated methods have been proposed, along with analyses \citep{su2014differential, defazio2019curved, laborde2019lyapunov, allen2017linear, lessard2016analysis, hu2017dissipativity, wibisono2016variational, bubeck2015geometric}.
Most notable cases include the Polyak's momentum \citep{polyak_methods_1964, polyak1987introduction} and the Nesterov's acceleration methods \citep{nesterov_lectures_2018, ahn_proximal_2020, nesterov1983method}. 
These methods allow faster (sometimes optimal) 
convergence rates, while having virtually the same computational cost as SGD. 
In particular, SGD with momentum (SGDM) \citep{polyak_methods_1964, polyak1987introduction} iterates as follows: 
\vspace{-0.6cm}
\begin{align}
\label{eq:sgdm}
    x_{t+1} = x_t  - \eta \nabla f_{i_t}(x_t) + \beta (x_t - x_{t-1}),
    \\[-20pt] \nonumber
\end{align}
where $\beta \in [0,1)$ is the momentum parameter.
The intuition is that, if the direction from $x_{t-1}$ to $x_t$ was ``correct," then SGDM utilizes this inertia weighted by the momentum parameter $\beta,$ instead of just relying on the current point $x_t.$ 
Much of the state-of-the-art performance has been achieved with SGDM \citep{huang2017densely, howard2017mobilenets, he2016deep}.

Yet, SGDM could be hard to tune: SGDM adds another hyperparameter---momentum $\beta$---to an already sensitive stochastic procedure of SGD.
As such, various works have found that such motions could aggravate the instability of SGD. 
For instance, \cite{liu_accelerating_2019} and \cite{kidambi_insufficiency_2018} show that accelerated SGD does not in general provide any acceleration over SGD, regardless of careful tuning; further, accelerated SGD may diverge for the step sizes that SGD converges. 
\cite{assran_convergence_2020} also show that, even with finite-sum of quadratic functions, accelerated SGD may diverge under the usual choices of step size and momentum.
See also \cite{loizou2020momentum, devolder2014first, d2008smooth} for more discussions on this topic.

\paragraph{Stability via Proximal Updates.}
With respect to numerical stability, variants of SGD that utilize proximal updates have recently been proposed \citep{ryu_stochastic_2017, toulis_statistical_2014, toulis_asymptotic_2017, toulis_proximal_2021, asi_stochastic_2019, asi_minibatch_2020}.
In particular, \cite{toulis_proximal_2021} introduced stochastic errors in proximal point algorithms (SPPA) and analyzed its convergence and stability, which iterates similar to: \vspace{-0.2cm}
\begin{align}
    x_{t+1} &= x_t - \eta \left( \nabla f(x_{t+1}) + \varepsilon_{t+1} \right)   \label{eq:stoc-ppa}. 
    \\[-21pt] \nonumber 
\end{align}
Without stochastic errors, \eqref{eq:stoc-ppa} is known as the proximal point algorithm (PPA) \citep{rockafellar_monotone_1976, guler_convergence_1991} or the implicit gradient descent (IGD). 
PPA/IGD is known to converge with minimal assumptions on hyperparameter tuning, by improving the conditioning of the optimization problem; more details in Section~\ref{sec:prelim}.
In the stochastic setting, \cite{toulis_proximal_2021} show that
SPPA enjoys an exponential discount of the initial condition, regardless of the step size $\eta$ and the smoothness parameter $L$. 
On the contrary, for SGD, 
both $\eta$ and $L$ show up within an exponential term, amplifying the initial conditions, leading to even divergence if misspecified \citep{moulines_non-asymptotic_2011}.

\paragraph{Our Focus and Contributions.}
Stochastic accelerated variants of PPA have received limited attention: how momentum interacts with the stability that PPA provides remains unstudied. 
To the best of our knowledge, \textit{no momentum has been considered for stochastic proximal point updates that, beyond convergence, also studies the stability of the acceleration motions.}
This is the aim of this work. 
Our contributions are summarized as:
\vspace{-0.1cm}
\begin{itemize}[leftmargin=*]
    \item We introduce stochastic PPA with momentum (SPPAM), and study its convergence and stability behavior. SPPAM directly incorporates the momentum term akin to \eqref{eq:sgdm} into \eqref{eq:stoc-ppa}: \vspace{-0.1cm}
\begin{align}
    x_{t+1} &= x_t - \eta \left (\nabla f(x_{t+1}) + \varepsilon_{t+1}\right) + \beta (x_t - x_{t-1})  \label{eq:acc-stoc-ppa}. 
    \\[-21pt] \nonumber
\end{align}
We study whether adding momentum $\beta$ results in faster convergence akin to SGDM, while preserving the numerical stability, inherited by utilizing proximal updates akin to SPPA. 
\vspace{-0.15cm}
\item We show that SPPAM enjoys linear convergence to a neighborhood (Theorem \ref{thm:lin-conv}) with a better contraction factor than SPPA (Lemma~\ref{lem:SPPAM-contraction}). We further characterize the conditions on $\eta$ and $\beta$ that result in acceleration (Corollay~\ref{cor:acc-condition}). 
Finally, we characterize the condition that leads to the exponential discount of initial conditions for SPPAM (Theorem~\ref{thm:init-discount-condition}), which is significantly easier to satisfy compared to SGDM.
\vspace{-0.15cm}
\item Empirically, we confirm our theory with experiments on generalized linear models (GLM), including
linear and Poisson regressions with different condition numbers. 
As expected, 
SGD and SGDM converge only for specific choices of $\eta$ and $\beta$, while SPPA converges for a much wider range of $\eta.$ 
SPPAM enjoys the advantages of both acceleration from the momentum and stability from the proximal step: it converges for the range of $\eta$ that SPPA converges but with faster rate, which improves or matches that of SGDM, when the latter converges.
\end{itemize}

\section{Preliminaries} 
\label{sec:prelim}

\renewcommand{\arraystretch}{1.0}
\begin{table*}[ht]
    \centering
    \caption{Comparison of different algorithms in Section \ref{sec:prelim}. $(\cdot)^{\alpha}$ is {{\cite{rockafellar_monotone_1976, guler_convergence_1991}}}; $(\cdot)^{\textcolor{red}{\beta}}$ is {{\cite{guler_new_1992, lin_universal_2015, lin_catalyst_2018}}}; 
    $(\cdot)^{\textcolor{blue}{\gamma}}$ is {{\cite{toulis_statistical_2014, toulis_asymptotic_2017, ryu_stochastic_2017}}}; 
    $(\cdot)^{\textcolor{magenta}{\delta}}$ is {{\cite{asi_stochastic_2019, asi_minibatch_2020}}}; 
    $(\cdot)^{\textcolor{purple}{\epsilon}}$ is {{\cite{kulunchakov_generic_2019}}};
    $(\cdot)^{\textcolor{orange}{\zeta}}$ is {{\cite{chadha_accelerated_2021}}};
    $(\cdot)^{\textcolor{olive}{\eta}}$ is {{\cite{deng_minibatch_2021}}}. 
    We highlight with color the algorithms that include momentum motions.}
    \label{tab:algos}
    \resizebox{\textwidth}{!}{
    \begin{tabular}{c c c c c}
    \toprule
        \rowcolor{light-gray}
        Method & & & & Deterministic  \\
        \cmidrule{1-1} \cmidrule{5-5}
        \multirow{2}{*}{PPA/IGD$^{\alpha}$} & 
        & & & $x_{t+1} = \arg \min_x \left\{ f(x) + \tfrac{1}{2\eta_t} \|x-x_t\|_2^2 \right\}$  \\
        & & & & $\Leftrightarrow x_{t+1} = x_t - \eta_t \nabla f(x_{t+1})$ \\ 
        \cmidrule{1-1} \cmidrule{5-5}
        \multirow{3}{*}{Acc. PPA/Catalyst$^{\textcolor{red}{\beta}}$} & & & & 
         $x_{t+1} \approx    \arg\min_x \left\{ f(x) + \tfrac{\kappa}{2} \|x-y_t \|_2^2 \right\}$  \\
        & & & & $y_t = x_t + \textcolor{red}{\beta_t (x_t - x_{t-1})}$  \\
        & & & & where $\alpha_t^2 = (1-\alpha_t) \alpha_{t-1}^2 + \tfrac{\mu}{\mu+\kappa}\alpha_t, \quad \beta_t = \tfrac{\alpha_{t-1}(1-\alpha_{t-1})}{\alpha_{t-1}^2 +\alpha_t}$ \\
        \bottomrule
        \rowcolor{light-gray} & & & & Stochastic  \\
        \cmidrule{1-1} \cmidrule{5-5}
        \multirow{2}{*}{SPI/ISGD$^{\textcolor{blue}{\gamma}}$}
        & & & & $ x_{t+1} = \arg \min_x \left\{ f_{i_t} (x) + \tfrac{1}{2\eta_t} \|x-x_t\|_2^2 \right\}  $ \\
        & & & & $\Leftrightarrow x_{t+1} = x_t - \eta_t \nabla f_{i_t}(x_{t+1})$ \\
        \cmidrule{1-1} \cmidrule{5-5}
        APROX$^{\textcolor{magenta}{\delta}}$ & & & & Set $f_{i_t}(x) := \max \left\{ f_{i_t}(x_t) + \langle \nabla f_{i_t}(x_t), x-x_t \rangle, \inf_z f_{i_t}(z) \right\}$ from SPI \\
        \cmidrule{1-1} \cmidrule{5-5}
        \multirow{1}{*}{Stochastic Catalyst$^{\textcolor{purple}{\epsilon}}$} & & & & 
        Set $f(x) := f(y_t) + \langle g_t, x-y_{t} \rangle + \tfrac{\kappa + \mu}{2} \|x-y_t\|_2^2$ from Catalyst \\
        \cmidrule{1-1} \cmidrule{5-5}
        \multirow{4}{*}{Acc. APROX$^{\textcolor{orange}{\zeta}}$} 
        & & & &  $y_t = \textcolor{red}{(1-\beta_t) x_t + \beta_t z_t}$  \\
        & & & & $z_t = \arg \min_x \left\{ f_{i_t}(x) + \frac{1}{\eta_t} \|x-z_t\|_2^2 \right\}$ \\
        & & & &  $x_{t+1} = \textcolor{red}{(1-\beta_t) x_t + \beta_t z_{t+1}}$\\
        & & & &  where $f_{i_t}(x) := \max \left\{ f_{i_t}(x) + \langle \nabla f_{i_t}(x), y-x \rangle, \inf_z f_{i_t}(z) \right\}$  \\
        \cmidrule{1-1} \cmidrule{5-5}
        \multirow{2}{*}{SEMOD$^{\textcolor{olive}{\eta}}$}
        & & & & $ y_t = x_t + \textcolor{red}{\beta (x_t - x_{t-1})} $ \\
        & & & & $ x_{t+1} = \arg \min_x \left\{ f_{i_t} (x) + \tfrac{1}{2\eta_t} \|x-y_t\|_2^2 \right\}  $ \\
        \cmidrule{1-1} \cmidrule{5-5}
        \rowcolor{LightCyan} \multirow{1}{*}{SPPAM (this work)} & & & 
        & $x_{t+1} = x_t - \eta \left( \nabla f(x_{t+1}) + \varepsilon_{t+1} \right) + \textcolor{red}{\beta (x_t - x_{t-1})}$ \\
        \bottomrule
    \end{tabular}
    }
\end{table*}

\paragraph{Proximal Point Algorithm (PPA).}
The proximal point algorithm (PPA) \citep{rockafellar_monotone_1976, guler_convergence_1991} obtains the next iterate for minimizing $f$ by solving the following optimization problem: 
\vspace{-0.1cm}
\begin{align}
\label{eq:ppa}
    x_{t+1} = \arg \min_{x\in \mathbb{R}^p} \left\{ f(x) + \tfrac{1}{2\eta} \| x - x_t \|_2^2 \right\}, \\[-20pt] \nonumber
\end{align}
which is equivalent to implicit gradient descent (IGD) by the first-order optimality condition: 
\vspace{-0.1cm}
\begin{align}
\label{eq:igd}
    x_{t+1} = x_t - \eta \nabla f(x_{t+1}). \\[-20pt] \nonumber
\end{align}
In words, instead of minimizing $f$ directly, PPA minimizes $f$ with an additional quadratic term. 
This small change brings a major advantage to PPA: if $f$ is convex, the added quadratic term can make the problem strongly convex; if $f$ is non-convex, PPA can make it convex \citep{ahn_proximal_2020}.
Thanks to this conditioning property, PPA exhibits different behavior compared to GD in the deterministic setting.
\cite{guler_convergence_1991} proved that for a convex function $f$, PPA satisfies: 
\begin{align}
\label{eq:ppm-conv-rate-guller}
    f(x_T) - f(x^\star) \leq O \left( \tfrac{1}{\sum_{t=1}^T \eta_t} \right),  \\[-16pt] \nonumber
\end{align}
after $T$ iterations.
By setting the step size $\eta_t$ to be large, PPA can converge ``arbitrarily'' fast. 

PPA was soon considered in the stochastic setting. 
In \cite{ryu_stochastic_2017}, a stochastic version of PPA, dubbed as stochastic proximal iterations (SPI), was analyzed, where an approximation of $f$ using a single data $f_i$ was considered.
The same algorithm was (statistically) analyzed under the name of implicit stochastic gradient descent (ISGD) \citep{toulis_statistical_2014, toulis_asymptotic_2017}, and was extended to the Robbins-Monro procedure in \cite{toulis_proximal_2021}. 
Similar algorithms were analyzed recently in \cite{asi_stochastic_2019, asi_minibatch_2020} where each $f_i$ was further approximated by simpler surrogate functions. 
These works generally indicate that, in the asymptotic regime, SGD and SPI/ISGD have the same convergence behavior; but in the non-asymptotic regime, SPI/ISGD outperforms SGD due to numerical stability provided by utilizing proximal updates.

\paragraph{Accelerated PPA.}
Accelerated PPA was first proposed in deterministic setting in \cite{guler_new_1992}, where Nesterov's acceleration was applied \emph{after} solving the proximal step in \eqref{eq:ppa}. This yields the convergence rate of the form: 
\vspace{-0.4cm}
\begin{align}
    f(x_T) - f(x^\star) \leq O \left( \tfrac{1}{ \left( \sum_{t=1}^T \sqrt{\eta_t} \right)^2} \right), \label{eq:ppm-acc-conv-rate-guller}
\end{align}
which is faster than the rate in \eqref{eq:ppm-conv-rate-guller}. 
This bound is based on Nesterov's momentum schedules, but does not study the effect in stability different tuning pairs $(\eta, \beta)$ might have. 
Moreover, as can be seen in \eqref{eq:ppm-conv-rate-guller}, PPA can already achieve arbitrarily fast convergence, given it is implemented exactly.
Hence, following works focused on studying the conditions under which the proximal step in \eqref{eq:ppa} can be computed inexactly, while still exhibiting some acceleration \citep{lin_universal_2015, lin_catalyst_2018}; similar analyses were later extended to the stochastic setting in \cite{kulunchakov_generic_2019}.

\cite{chadha_accelerated_2021} and \cite{deng_minibatch_2021} also considered accelerated SPPA.
Both of these works apply a convoluted 2- or 3-step Nesterov's procedure after the proximal step, where $f_i$ was further approximated with auxiliary functions. 
Yet, stability arguments via proximal updates are less apparent due to the auxiliary functions, requiring specific step size and momentum schedules, which might involve an additional one-dimensional optimization per iteration; see also Theorem~\ref{thm:init-discount-condition}.
A summary of these algorithms is provided in Table~\ref{tab:algos}.

\paragraph{Intuition of SPPAM in \eqref{eq:acc-stoc-ppa}.}

In contrast to the aforementioned works, we include Polyak's momentum \citep{polyak_methods_1964}
directly to SPPA, yielding \eqref{eq:acc-stoc-ppa}. 
Apart from the similarity between SPPAM in \eqref{eq:acc-stoc-ppa} and SGDM in \eqref{eq:sgdm}, SPPAM shares the same geometric intuition as Polyak's momentum for SGDM.
Disregarding the stochastic errors, the update in  \eqref{eq:acc-stoc-ppa} follows from the solution of: 
\vspace{-0.1cm}
\begin{align*}
    \arg \min_{x \in \mathbb{R}^p} \left\{ f(x) + \tfrac{1}{2\eta} \|x-x_t\|_2^2 - \tfrac{\beta}{\eta} \langle x_t - x_{t-1}, x \rangle \right\}. \\[-20pt] \nonumber
\end{align*}

We can get a sense of the behavior of SPPAM from this expression.
First, for large $\eta$, the algorithm is minimizing the original $f.$ 
For small $\eta$, the algorithm not only tries to stay local by minimizing the quadratic term, but also tries to minimize 
$-\frac{\beta}{\eta} \langle x_t - x_{t-1}, x \rangle$.
By the definition of inner product, this means that $x$, on top of minimizing $f$ and staying close to $x_t$, also tries to move along the direction from $x_{t-1}$ to $x_t$. This intuition aligns with that of Polyak's momentum.

\section{The Quadratic Model Case}
\label{sec:quad-model}

\begin{figure*}[!h]
    \centering
    \includegraphics[width=0.195\linewidth]{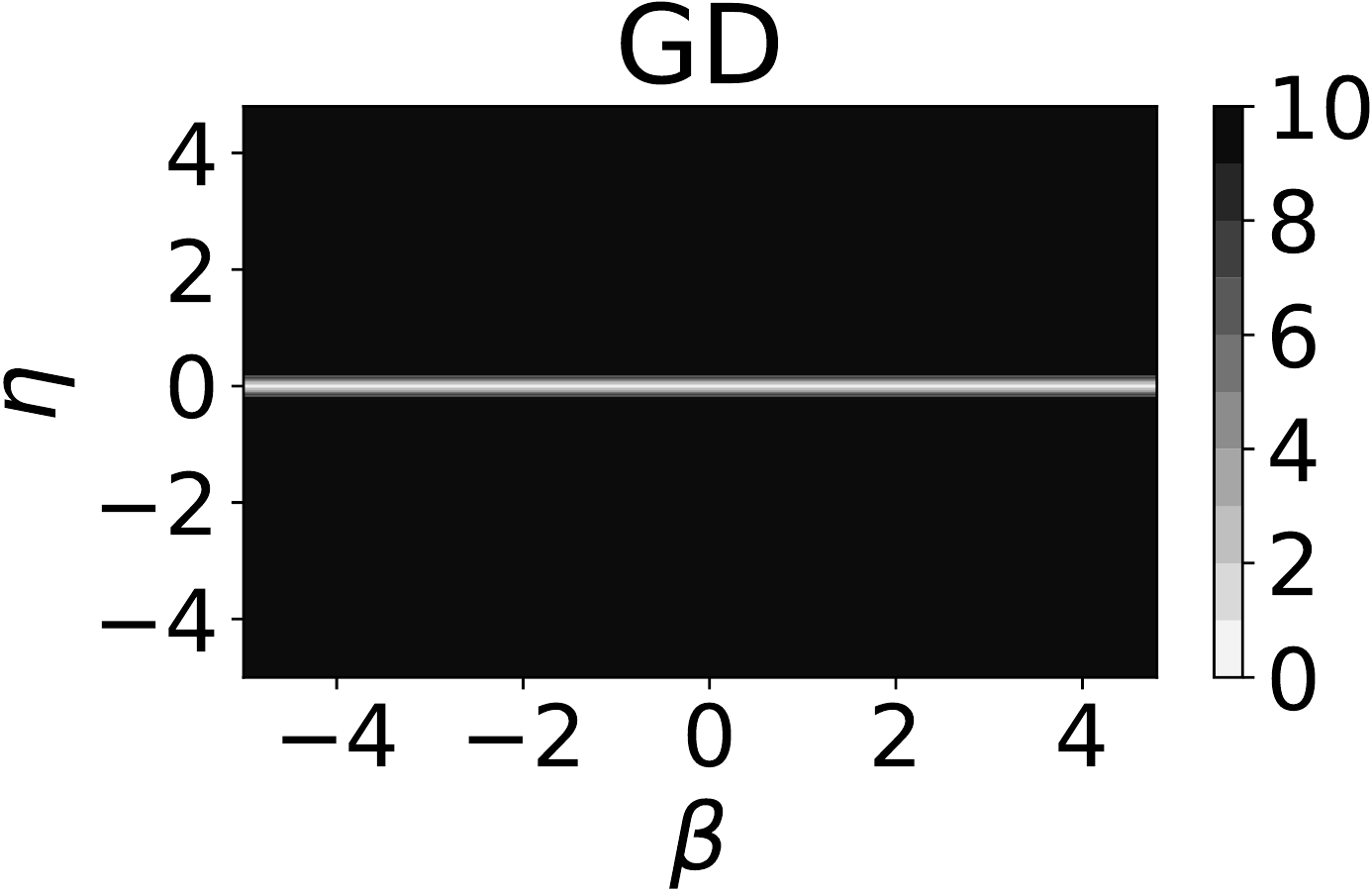}
    \includegraphics[width=0.195\linewidth]{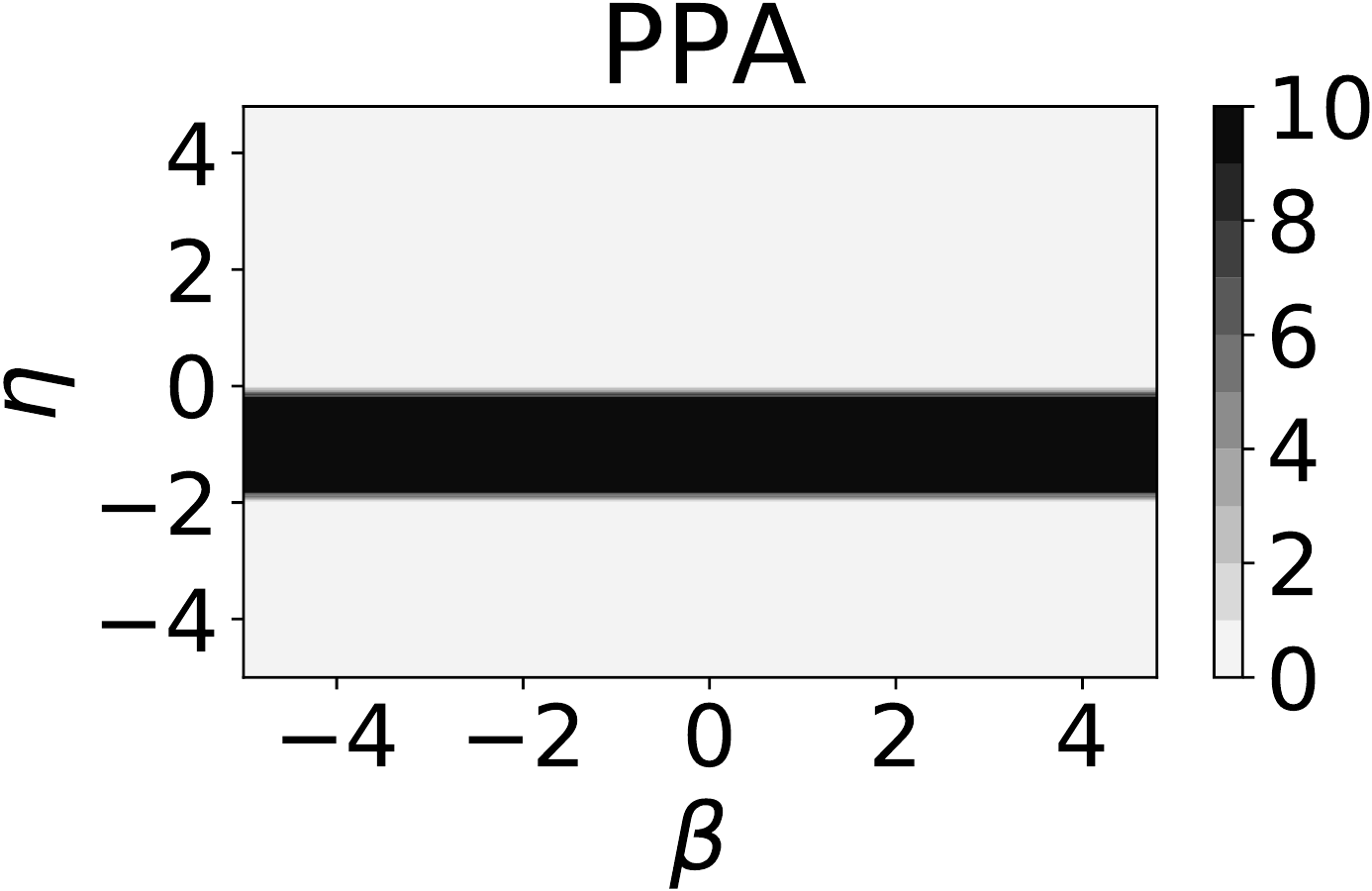}
    \includegraphics[width=0.195\linewidth]{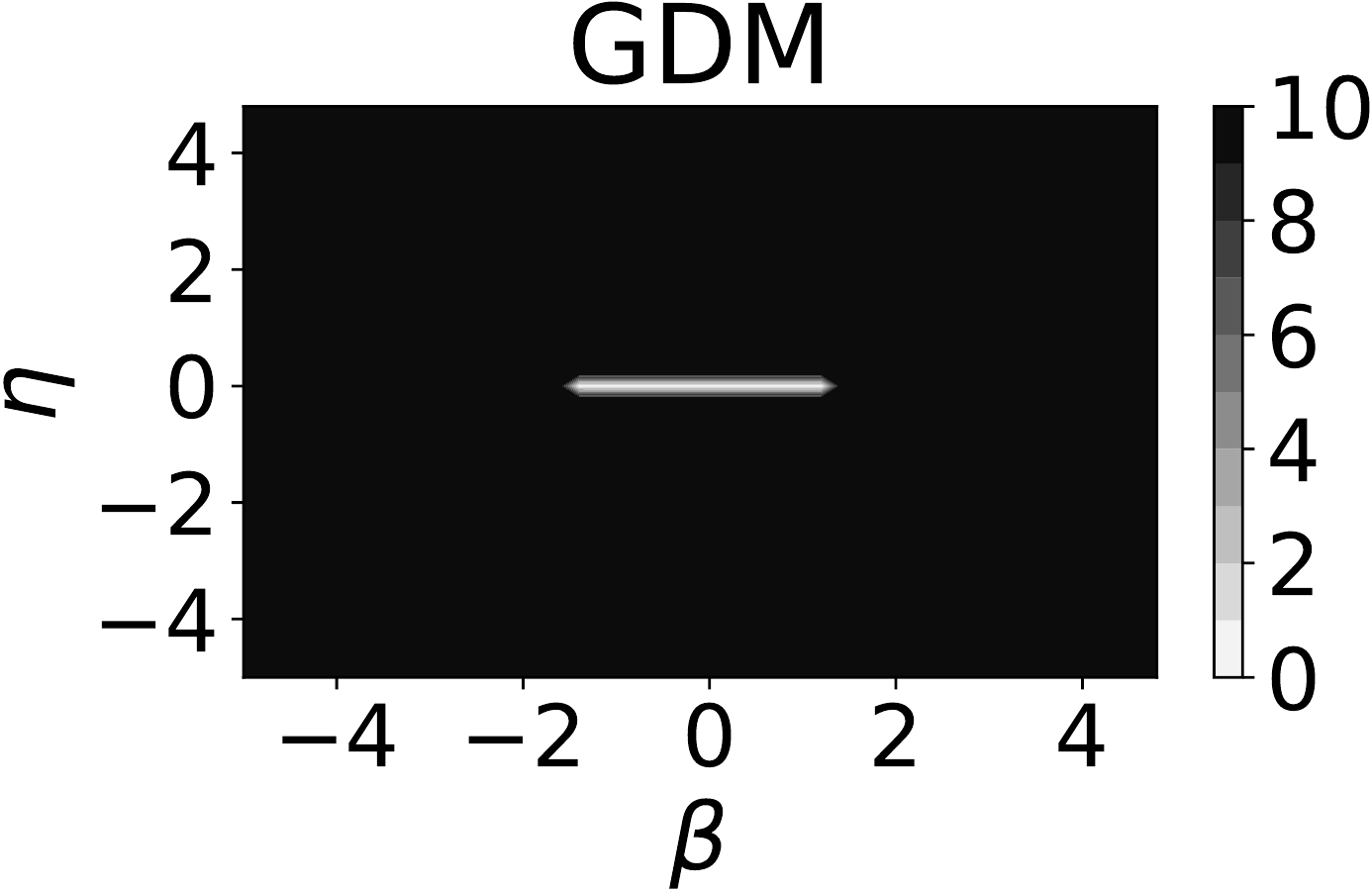}
    \includegraphics[width=0.195\linewidth]{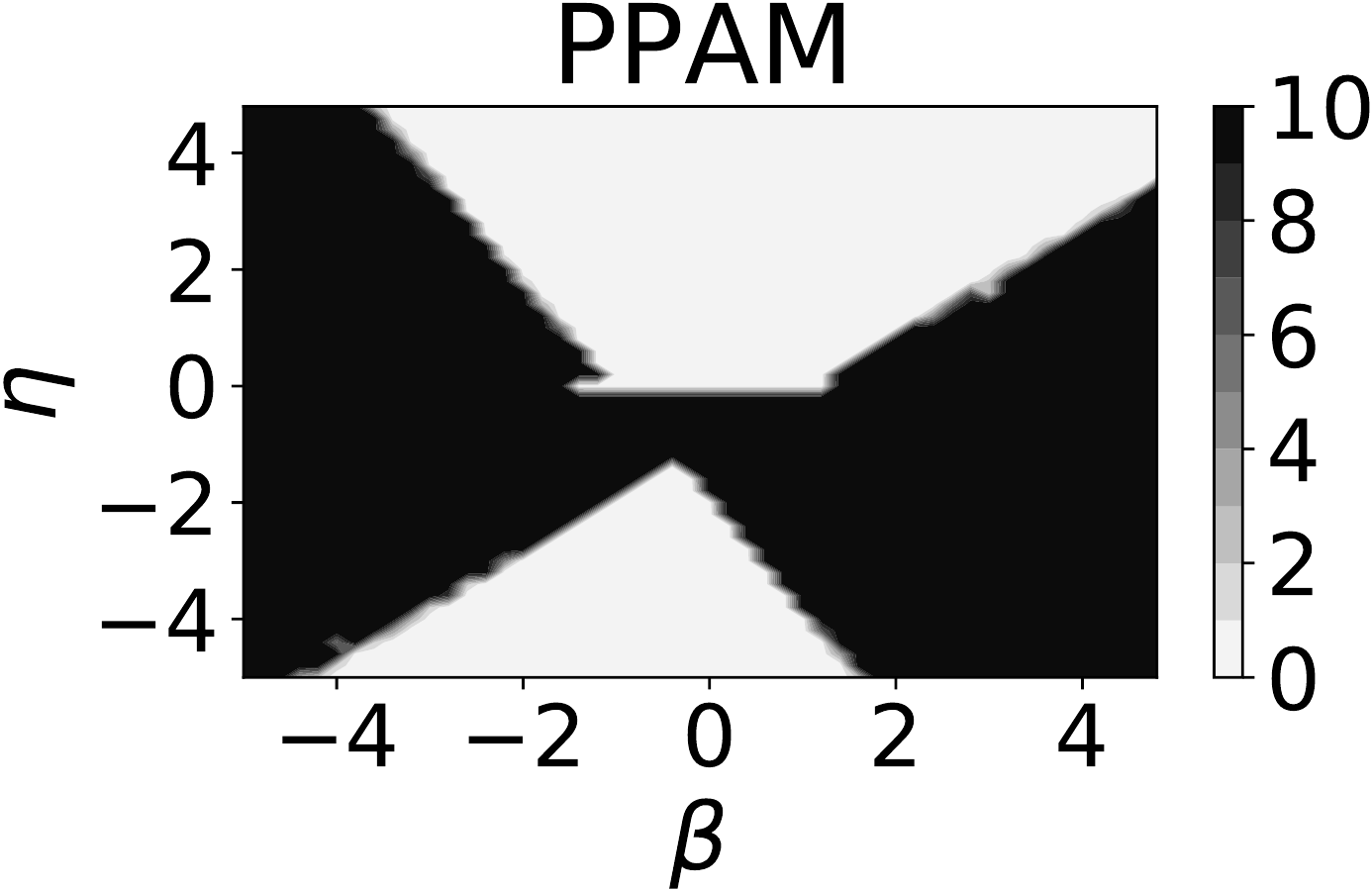}
    \includegraphics[width=0.195\linewidth]{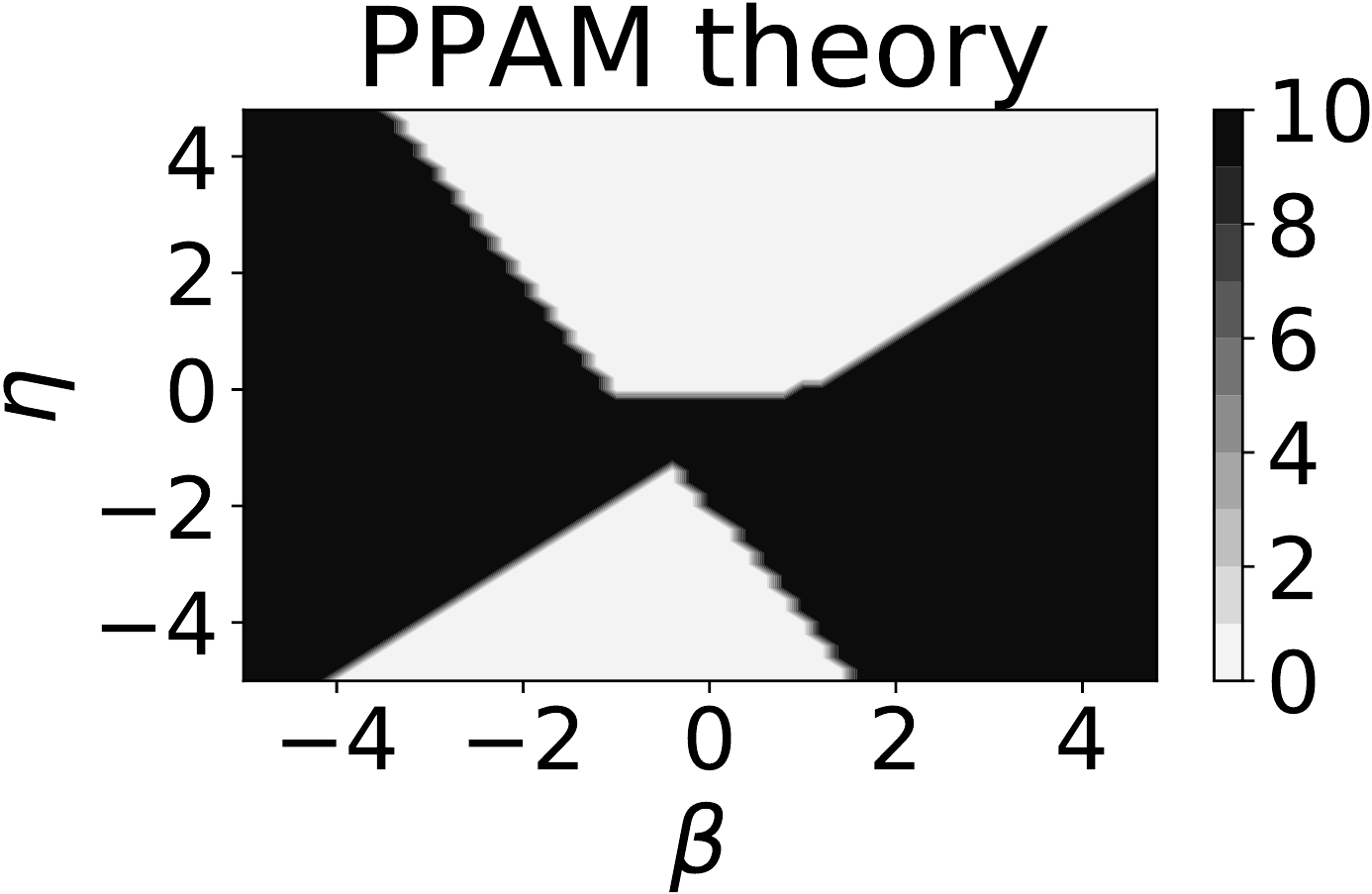}
    \vspace{-0.6cm}
    \caption{
    We generate $A \in \mathbb{R}^{p\times p}$ and $b, x^\star \in\mathbb{R}^p$ from $\mathcal{N}(0, I)$, where $p=100$ and the condition number of $A$ is 10. We sweep $\eta$ and $\beta$ from $-5$ to $5$, with $0.2$ interval. We plot the accuracy $\|x_t - x^\star\|_2^2$ after 100 iterations, with the maximum replaced by 10. 
    }
    \label{fig:conv-region}
    \vspace{-0.4cm}
\end{figure*}

For simplicity, we first consider the convex quadratic optimization problem under the deterministic setting. 
Specifically, we consider the objective function: 
\vspace{-0.1cm}
\begin{align}
\label{eq:obj-quad}
    f(x) = \frac{1}{2} x^\top A x - b^\top x, 
     \\[-21pt] \nonumber
\end{align}
where $A \in \mathbb{R}^{p\times p}$ is positive semi-definite with eigenvalues $\left[ \lambda_1, \dots, \lambda_p \right]$. 
Under this scenario, 
we can study how the step size $\eta$ and the momentum $\beta$ affect each other, by deriving exact conditions that lead to convergence for each algorithm.
The comparison list includes 
gradient descent (GD), gradient descent with momentum (GDM), the PPA, and PPA with momentum (PPAM).
Propositions~\ref{prop:gd} and \ref{prop:gdm} for GD and GDM are from {{\cite{goh2017why}}}, and included for completeness. 
Proofs for PPA and PPAM in Propositions~\ref{prop:ppa} and \ref{prop:ppam} can be found in the extended version of this work \citep{kim2021convergence}.

\begin{proposition}[GD \citep{goh2017why}] \label{prop:gd}
  To minimize \eqref{eq:obj-quad} with gradient descent, the step size $\eta$ needs to satisfy $0 < \eta < \frac{2}{\lambda_i}~~\forall i$, where $\lambda_i$ is the $i$-th eigenvalue of $A$.
\end{proposition}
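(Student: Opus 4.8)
The plan is to reduce the dynamics to a decoupled collection of scalar linear recursions via the spectral decomposition of $A$, and then read off the stability condition. First I would compute $\nabla f(x) = Ax - b$, so that any minimizer $x^\star$ of \eqref{eq:obj-quad} satisfies the normal equation $Ax^\star = b$. Substituting into \eqref{eq:sgd} with $f_{i_t}$ replaced by the full $f$ (deterministic setting), the GD update becomes
\begin{align*}
    x_{t+1} = x_t - \eta(Ax_t - b) = x_t - \eta A(x_t - x^\star).
\end{align*}
Defining the error $e_t := x_t - x^\star$ and using $Ax^\star = b$, this is exactly the linear recursion $e_{t+1} = (I - \eta A)\, e_t$, hence $e_t = (I - \eta A)^t e_0$.

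Next I would diagonalize. Since $A$ is symmetric positive semi-definite, write $A = Q\Lambda Q^\top$ with $Q$ orthogonal and $\Lambda = \mathrm{diag}(\lambda_1,\dots,\lambda_p)$. Then $I - \eta A = Q(I - \eta\Lambda)Q^\top$ and, setting $\tilde e_t := Q^\top e_t$, the recursion decouples into $p$ independent scalar recursions $(\tilde e_{t+1})_i = (1 - \eta\lambda_i)(\tilde e_t)_i$, so $(\tilde e_t)_i = (1-\eta\lambda_i)^t (\tilde e_0)_i$. Because $Q$ is an isometry, $\|e_t\|_2 \to 0$ for every initialization if and only if $(1-\eta\lambda_i)^t \to 0$ for every index $i$ with $\lambda_i > 0$, i.e.\ $|1 - \eta\lambda_i| < 1$. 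Solving this inequality gives $0 < \eta < 2/\lambda_i$, and requiring it simultaneously for all (positive) eigenvalues yields the claimed condition.

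The only subtle point — and the part I would state carefully rather than brush past — is the interplay between the ``for all $i$'' quantifier and the spectral radius: convergence of $(I-\eta A)^t$ to zero is equivalent to its spectral radius being strictly less than one, which is $\max_i |1 - \eta\lambda_i| < 1$, and this maximum is governed by the extreme eigenvalues; the binding constraint is $\eta < 2/\lambda_{\max}$. (If $A$ has a zero eigenvalue the corresponding error component is frozen but does not diverge; the statement is phrased for the eigenvalues that actually constrain $\eta$.) I expect no genuine obstacle here — the argument is elementary once the error recursion is linearized — so the main care is simply in making the ``if and only if'' precise via the isometry $Q$ and the scalar analysis, and in noting the necessity direction (a bad $\eta$ makes some mode grow geometrically).
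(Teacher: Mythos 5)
Your proof is correct and follows essentially the same route the paper relies on: the paper defers Proposition~\ref{prop:gd} to \cite{goh2017why}, but its own proofs of the analogous Propositions~\ref{prop:ppa} and \ref{prop:ppam} use exactly this spectral decomposition / change-of-basis argument, reducing the iteration to decoupled scalar recursions and reading off the condition $|1-\eta\lambda_i|<1$. Your added care about the spectral radius, the zero-eigenvalue modes, and the necessity direction is a fine refinement but does not change the substance.
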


\begin{proposition}[PPA] \label{prop:ppa}
  To minimize \eqref{eq:obj-quad} with PPA, the step size $\eta$ needs to satisfy $\left| \frac{1}{1+\eta \lambda_i} \right| < 1~~\forall i$.
\end{proposition}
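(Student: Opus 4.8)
The plan is to turn the implicit PPA recursion on the quadratic \eqref{eq:obj-quad} into an explicit linear fixed-point iteration and then read off the convergence condition from the spectrum of the iteration matrix. First I would compute $\nabla f(x) = Ax - b$ and substitute into the implicit update \eqref{eq:igd}, which gives $(I + \eta A)\, x_{t+1} = x_t + \eta b$. Since $A$ is positive (semi-)definite and $\eta > 0$, the matrix $I + \eta A$ is invertible, so the update is well defined for every $\eta>0$ and can be written explicitly as $x_{t+1} = (I+\eta A)^{-1}(x_t + \eta b)$ --- already a marked contrast with the GD case in Proposition~\ref{prop:gd}, where no such automatic invertibility is available.

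Next I would center the recursion at a minimizer $x^\star$, characterized by $A x^\star = b$. Substituting $b = A x^\star$ and using $(I+\eta A)^{-1}(I + \eta A)\,x^\star = x^\star$, subtraction yields the clean error recursion $x_{t+1} - x^\star = (I+\eta A)^{-1}(x_t - x^\star)$, hence $x_t - x^\star = (I+\eta A)^{-t}(x_0 - x^\star)$. Therefore $x_t \to x^\star$ for every initialization if and only if the spectral radius $\rho\big((I+\eta A)^{-1}\big)$ is strictly less than one.

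To evaluate that spectral radius I would diagonalize $A = Q \Lambda Q^\top$ with $Q$ orthogonal and $\Lambda = \operatorname{diag}(\lambda_1,\dots,\lambda_p)$; then $(I+\eta A)^{-1} = Q\,(I+\eta\Lambda)^{-1}Q^\top$ is orthogonally similar to the diagonal matrix with entries $\tfrac{1}{1+\eta\lambda_i}$. Consequently $\rho\big((I+\eta A)^{-1}\big) = \max_i \big|\tfrac{1}{1+\eta\lambda_i}\big|$, and the convergence condition becomes exactly $\big|\tfrac{1}{1+\eta\lambda_i}\big| < 1$ for all $i$, which is the claimed statement.

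I do not expect a genuine obstacle: the argument is a routine eigen-decomposition once the implicit step is inverted. The only point deserving a word of care is the degenerate direction $\lambda_i = 0$, where $\tfrac{1}{1+\eta\lambda_i} = 1$ and the inequality (correctly) fails, so the statement tacitly assumes the relevant eigenvalues are positive, just as in Proposition~\ref{prop:gd}; and one should note that when $A$ is merely positive semi-definite the minimizer need not be unique, but the error recursion above holds for any fixed $x^\star$ with $Ax^\star=b$, so the conclusion is unaffected.
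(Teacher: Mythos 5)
Your proposal is correct and follows essentially the same route as the paper: both reduce the implicit update on the quadratic to the error recursion with contraction factor $\tfrac{1}{1+\eta\lambda_i}$ along each eigendirection of $A$ (the paper works component-wise in the eigenbasis via $z_t = Q^\top(x_t - x^\star)$, while you invert $I+\eta A$ in matrix form and then diagonalize, which is the same computation). Your added remarks on the $\lambda_i = 0$ case and non-unique minimizers are fine but not needed for the stated claim.
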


\begin{proposition}[GDM \citep{goh2017why}] \label{prop:gdm}
  To minimize \eqref{eq:obj-quad} with gradient descent with momentum, the step size $\eta$ needs to satisfy $0 < \eta \lambda_i < 2 + 2\beta$ ~ $\forall i$, where $0 \leq \beta < 1.$ 
\end{proposition}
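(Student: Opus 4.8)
The plan is to subtract the minimizer, diagonalize $A$, and thereby reduce the GDM recursion on \eqref{eq:obj-quad} to a family of decoupled scalar second-order recursions, one per eigenvalue, each of which is then analyzed via a root-location criterion. For the quadratic \eqref{eq:obj-quad} we have $\nabla f(x) = Ax - b$ and minimizer $x^\star = A^{-1}b$; writing $e_t := x_t - x^\star$, the deterministic (full-gradient) analogue of \eqref{eq:sgdm} becomes
\[
  e_{t+1} = \bigl((1+\beta)I - \eta A\bigr)\, e_t - \beta\, e_{t-1},
\]
since $Ax_t - b = A e_t$. Diagonalizing $A = Q\Lambda Q^\top$ with $\Lambda = \mathrm{diag}(\lambda_1,\dots,\lambda_p)$ and setting $z^{(i)}_t := (Q^\top e_t)_i$, each coordinate satisfies the scalar recursion $z_{t+1} = \sigma_i z_t - \beta z_{t-1}$ with $\sigma_i := 1 + \beta - \eta\lambda_i$. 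GDM converges from every initialization if and only if all $p$ of these scalar recursions are asymptotically stable.

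Next I would recast each scalar recursion as a two-dimensional linear iteration via its companion matrix $M_i$ (first row $(\sigma_i,\,-\beta)$, second row $(1,\,0)$), whose characteristic polynomial is $p_i(\mu) = \mu^2 - \sigma_i\mu + \beta$. Asymptotic stability of the $i$-th recursion is equivalent to $\rho(M_i) < 1$, i.e.\ both roots of $p_i$ lying strictly inside the open unit disk. For a monic real quadratic $\mu^2 + a_1\mu + a_0$, the Schur--Cohn (Jury) test characterizes this by $|a_0| < 1$ together with $|a_1| < 1 + a_0$. With $a_0 = \beta$ and $a_1 = -\sigma_i$, the first condition reads $\beta < 1$, which is assumed, and the second reads $|1+\beta - \eta\lambda_i| < 1+\beta$; since $1+\beta > 0$ this is precisely $0 < \eta\lambda_i < 2 + 2\beta$. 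Imposing it for every $i$ yields the statement.

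The step needing the most care is the root-location criterion for $p_i$. If one prefers a self-contained argument in place of the Jury test, I would split on the sign of the discriminant $\sigma_i^2 - 4\beta$: when it is negative the roots are complex conjugates with product $\beta$, hence each has modulus $\sqrt{\beta} < 1$ automatically; when it is nonnegative the roots are real, and both lie in $(-1,1)$ iff $p_i(1) > 0$, $p_i(-1) > 0$, and the vertex $\sigma_i/2 \in (-1,1)$ --- which reduce to $\eta\lambda_i > 0$ and $2+2\beta - \eta\lambda_i > 0$ (the vertex condition then being implied). In either regime one recovers exactly $0 < \eta\lambda_i < 2+2\beta$. Finally I would record the implicit nondegeneracy assumption: the requirement $0 < \eta\lambda_i$ forces $\lambda_i > 0$, so the claim presumes $A \succ 0$ (a zero eigenvalue leaves a coordinate governed by $(\mu-1)(\mu-\beta)$, which has a root at $1$ and hence does not decay).
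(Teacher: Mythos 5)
Your argument is correct, but note that the paper does not actually prove Proposition~\ref{prop:gdm}: it imports the statement from \cite{goh2017why} ``for completeness'' and only proves the proximal analogues (Propositions~\ref{prop:ppa} and~\ref{prop:ppam}) in the appendix. Your route is the same in spirit as both Goh's argument and the paper's own PPAM proof --- subtract $x^\star$, diagonalize $A$, and reduce to a per-eigenvalue two-dimensional linear recursion whose spectral radius must be below one --- but you close the argument differently: instead of computing the two eigenvalues of the $2\times 2$ transition matrix explicitly and splitting into complex/real root cases (as the paper does for PPAM, where the case analysis produces the three separate conditions in Proposition~\ref{prop:ppam}), you apply the Schur--Cohn/Jury criterion to the characteristic polynomial $\mu^2 - \sigma_i \mu + \beta$, which collapses the case analysis into the two inequalities $\beta<1$ and $|1+\beta-\eta\lambda_i|<1+\beta$ and yields $0<\eta\lambda_i<2+2\beta$ in one line; your backup discriminant argument is also sound, and the claim that the vertex condition is implied holds since $|\sigma_i|<1+\beta<2$ under the other two inequalities. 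This buys a shorter and more uniform derivation at the cost of invoking (or re-proving) the root-location test, whereas the paper's explicit-eigenvalue style makes the contraction factor itself visible, which it needs later for rate comparisons. Your final remark is a genuine (minor) sharpening: the paper states $A$ is positive semi-definite, but the condition $0<\eta\lambda_i$ for all $i$ implicitly requires $\lambda_i>0$, i.e.\ $A\succ 0$, exactly as you observe via the factor $(\mu-1)(\mu-\beta)$ at a zero eigenvalue.
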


\begin{proposition}[PPAM] \label{prop:ppam}
    Let $\delta_i = \left( \frac{\beta+1}{1+\eta \lambda_i} \right)^2 - \frac{4\beta}{1+\eta \lambda_i}.$
    To minimize \eqref{eq:obj-quad} with PPAM, the step size $\eta$ and momentum $\beta$ need to satisfy $~\forall i$: 
    \vspace{-0.1cm}
    \begin{itemize}[leftmargin=0.8cm]
        \item $\eta > \frac{\beta-1}{\lambda_i},$  \quad \quad \quad \quad \quad ~if ~$\delta_i \leq 0$;
        \vspace{-2mm}
        \item $\frac{\beta+1}{1+\eta \lambda_i} + \sqrt{\delta_i} < 2,$ ~ \quad if ~$\delta_i > 0$ ~and~ $\frac{\beta+1}{1+\eta \lambda_i} \geq 0$;
        \vspace{-2mm}
        \item $\frac{\beta+1}{1+\eta \lambda_i} - \sqrt{\delta_i} > -2,$  \quad otherwise.
    \end{itemize}
\end{proposition}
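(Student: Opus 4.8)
The plan is to reduce the vector recursion defining PPAM to a family of scalar second–order linear recurrences, one per eigenvalue of $A$, and then apply the standard root–location criterion for their stability. Since $\nabla f(x) = Ax - b$, dropping the stochastic error from \eqref{eq:acc-stoc-ppa} and moving the $x_{t+1}$ terms to the left gives $(I+\eta A)\,x_{t+1} = (1+\beta)\,x_t - \beta\,x_{t-1} + \eta b$. Any minimizer $x^\star$ of \eqref{eq:obj-quad} satisfies $Ax^\star = b$, hence is a fixed point of this map; subtracting it and setting $e_t := x_t - x^\star$ yields the homogeneous recursion $e_{t+1} = (I+\eta A)^{-1}\bigl((1+\beta)\,e_t - \beta\,e_{t-1}\bigr)$. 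As $A$ is symmetric it is orthogonally diagonalizable, $A = Q\Lambda Q^\top$, and $I+\eta A$ has the same eigenvectors, so with $\xi^{(i)}_t := (Q^\top e_t)_i$ the recursion decouples into the $p$ scalar recurrences $\xi^{(i)}_{t+1} = \tfrac{1+\beta}{1+\eta\lambda_i}\,\xi^{(i)}_t - \tfrac{\beta}{1+\eta\lambda_i}\,\xi^{(i)}_{t-1}$.

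Writing each scalar recurrence in companion (state–space) form, $e_t \to 0$ for every initialization if and only if each $\xi^{(i)}_t \to 0$, which holds if and only if both roots of the characteristic polynomial $z^2 - \tfrac{1+\beta}{1+\eta\lambda_i}\,z + \tfrac{\beta}{1+\eta\lambda_i} = 0$ lie strictly inside the unit disk. Writing $a_i := \tfrac{\beta+1}{1+\eta\lambda_i}$ and $c_i := \tfrac{\beta}{1+\eta\lambda_i}$, the discriminant of this quadratic is precisely $\delta_i = a_i^2 - 4c_i$, which is what motivates the case split in the statement.

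For the case analysis: when $\delta_i \le 0$ the two roots are complex conjugates (a repeated real root when $\delta_i = 0$), so $|z|^2$ equals their product $c_i = \tfrac{\beta}{1+\eta\lambda_i}$; moreover $\delta_i \le 0$ together with $0 \le \beta < 1$ forces $\beta>0$ and $1+\eta\lambda_i \ge \tfrac{(1+\beta)^2}{4\beta} > 0$, so the stability condition $|z| < 1$ is equivalent to $\tfrac{\beta}{1+\eta\lambda_i} < 1$, i.e. to $\eta > \tfrac{\beta-1}{\lambda_i}$. When $\delta_i > 0$ the roots are the distinct reals $z_\pm = \tfrac12\bigl(a_i \pm \sqrt{\delta_i}\bigr)$ with $z_- < z_+$, so both lie in $(-1,1)$ exactly when $z_- > -1$ and $z_+ < 1$, i.e. $a_i - \sqrt{\delta_i} > -2$ and $a_i + \sqrt{\delta_i} < 2$. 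It then remains to see that only one of these is binding depending on the sign of $a_i$: if $a_i \ge 0$ the inequality $a_i + \sqrt{\delta_i} < 2$ gives $\sqrt{\delta_i} < 2 - a_i \le 2 + a_i$, so the other inequality is automatic; symmetrically, if $a_i < 0$ then $a_i - \sqrt{\delta_i} > -2$ gives $\sqrt{\delta_i} < 2 + a_i < 2 - a_i$. Combining the three cases reproduces the stated conditions.

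The main obstacle is the sign bookkeeping around the denominator $1+\eta\lambda_i$: because $\eta$ (and $\lambda_i$, if $A$ is merely positive semi–definite) may make it negative, one cannot clear it freely, so every equivalence above must track whether $1+\eta\lambda_i > 0$ — in the first case this is forced, and in the remaining cases it is absorbed into the orientation of the inequalities after taking square roots. The rest is elementary: the two implications ``$a_i \ge 0 \Rightarrow$ the upper root bound implies the lower one'' and its mirror image, together with the boundary/degenerate subcases ($\delta_i = 0$, and $\lambda_i = 0$, for which $\delta_i = (1-\beta)^2$ and the criterion correctly fails). As a consistency check one can also verify that the three displayed conditions are together equivalent to the compact Schur--Cohn (Jury) form $|c_i| < 1$ and $|a_i| < 1 + c_i$.
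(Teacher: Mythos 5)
Your proof is correct and is essentially the paper's argument: both diagonalize $A$, reduce PPAM to a per-eigenvalue linear recursion whose characteristic roots are $\tfrac{1}{2}\left(a_i \pm \sqrt{\delta_i}\right)$ with $a_i = \tfrac{\beta+1}{1+\eta\lambda_i}$, and case-split on the sign of $\delta_i$ and of $a_i$ to force the spectral radius below one. The only cosmetic difference is that you analyze the scalar second-order recurrence and its characteristic polynomial directly, whereas the paper passes through the classical-momentum form and a $2\times 2$ transition matrix whose trace and determinant are exactly your $a_i$ and $\beta/(1+\eta\lambda_i)$, so the root-location analysis coincides.
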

Given the above propositions, we can study the stability with respect to the step size $\eta$ and the momentum $\beta$ for the considered algorithms. 
Numerical simulations support the above propositions, and are illustrated in Figure~\ref{fig:conv-region}, matching the theoretical conditions exhibited above. 
In particular, for GD ({\sf1st}), only a small range of step sizes $\eta$ leads to convergence (small white band); this ``white band'' corresponds to the restriction that $\eta$ has to satisfy $\eta < \tfrac{2}{\lambda_i}$ for all $i$. 
On the other hand, PPA/IGD ({\sf2nd}) converges in much wider choices of $\eta$; this is apparent from Proposition \ref{prop:ppa}, since $\left| \frac{1}{1+\eta \lambda_i} \right|$ can be arbitrarily small for larger values of $\eta$.
GDM ({\sf3rd}) requires both $\eta$ and $\beta$ to be in a small region to converge, following Proposition \ref{prop:gdm}.
Finally, PPAM ({\sf4th}) converges in much wider choices of $\eta$ and $\beta$; e.g., the conditions in Proposition \ref{prop:ppam} define different regions of the pair $(\eta, \beta)$ that lead to convergence, some of which set both $\eta$ and $\beta$ to be negative. 
Note that the empirical convergence region of PPAM almost exactly matches the theoretical region that leads to convergence in Proposition~\ref{prop:ppam} ({\sf5th}).
In the remainder of the paper, we study how such pattern translates to a general strongly convex function $f,$ with stochasticity.

\section{Theoretical Results}
\label{sec:theory}

In this section, we theoretically characterize the convergence and stability behavior of SPPAM.\footnote{All proofs are provided in the extended version of this work \citep{kim2021convergence}.}
We follow the stochastic errors of PPA, as set up in \cite{toulis_proximal_2021}; we can thus express \eqref{eq:acc-stoc-ppa} as:\footnote{
$x_{t+1}^+$ is an auxiliary intermediate variable that is only used for the analysis.} 
\begin{align*}
    x_{t+1}^+ 
    &= x_t - \eta \nabla f(x_{t+1}^+) + \beta (x_t - x_{t-1})  \\
    x_{t+1} &= x_{t+1}^+ - \eta  \varepsilon_{t+1}.
\end{align*}
We further assume the following: \vspace{-0.1cm}
\begin{assumption} \label{assump:str-convex}
    $f$ is a $\mu$-strongly convex function. That is, for some fixed $\mu > 0$ and for all $x$ and $y$,
    \begin{align*} 
    \langle \nabla f(x)-\nabla f(y), x - y \rangle \geq \mu \|x-y \|_2^2.
    \end{align*}
\end{assumption}
\vspace{-0.3cm}
\begin{assumption} \label{assump:bounded-var}
    There exists fixed $\sigma^2 > 0$ such that, given the natural filtration $\mathcal{F}_{t-1},$
    \begin{align*}
        \mathbb{E}\left[ \varepsilon_t \mid \mathcal{F}_{t-1} \right] = 0 ~~\text{and}~~
        \mathbb{E}\left[ \| \varepsilon_t \|^2 \mid \mathcal{F}_{t-1} \right] \leq \sigma^2
        ~~\text{for all}~~ t. 
        \\[-18pt] \nonumber
    \end{align*}
\end{assumption}

We now study whether SPPAM enjoys faster convergence than SPPA in \eqref{eq:stoc-ppa}.
We start with the iteration invariant bound:
\begin{theorem} 
\label{thm:onestep-acc-stoc-prox}
  For $\mu$-strongly convex $f$, SPPAM 
satisfies the following iteration invariant bound: 
\vspace{-0.1cm}
\begin{align} \label{eq:onestep-acc-stoc-prox}
    \ex{ \|x_{t+1 }- x^\star\|_2^2 } &\leq \tfrac{4}{(1+\eta \mu)^2} \ex{ \|x_t - x^\star \|_2^2} + \tfrac{4\beta^2}{(1+\eta \mu)^2\left( 4-(1+\beta)^2 \right)} \ex{ \|x_{t-1} - x^\star \|_2^2 } + \eta^2 \sigma^2. \\[-10pt] \nonumber 
\end{align}
\end{theorem}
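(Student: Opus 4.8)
The plan is to route the analysis through the auxiliary iterate $x_{t+1}^{+}$, which is a \emph{deterministic} function of $x_t$ and $x_{t-1}$, so that the argument decomposes cleanly into (i) a deterministic contraction estimate for $x_{t+1}^{+}$ and (ii) a noise-injection step for $x_{t+1} = x_{t+1}^{+} - \eta\varepsilon_{t+1}$. Since $x^\star$ minimizes $f$ we have $\nabla f(x^\star)=0$, so the implicit update rearranges to
\[
x_{t+1}^{+} - x^\star + \eta\nabla f(x_{t+1}^{+}) = (1+\beta)(x_t - x^\star) - \beta(x_{t-1}-x^\star).
\]
Taking squared norms, the left side equals $\|x_{t+1}^{+}-x^\star\|_2^2 + 2\eta\langle x_{t+1}^{+}-x^\star, \nabla f(x_{t+1}^{+})\rangle + \eta^2\|\nabla f(x_{t+1}^{+})\|_2^2$, and Assumption~\ref{assump:str-convex} used twice --- as $\langle \nabla f(x_{t+1}^{+}), x_{t+1}^{+}-x^\star\rangle \ge \mu\|x_{t+1}^{+}-x^\star\|_2^2$ and (via Cauchy--Schwarz together with $\nabla f(x^\star)=0$) as $\|\nabla f(x_{t+1}^{+})\|_2 \ge \mu\|x_{t+1}^{+}-x^\star\|_2$ --- lower-bounds it by $(1+\eta\mu)^2\|x_{t+1}^{+}-x^\star\|_2^2$. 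Hence $\|x_{t+1}^{+}-x^\star\|_2^2 \le (1+\eta\mu)^{-2}\,\bigl\|(1+\beta)(x_t - x^\star) - \beta(x_{t-1}-x^\star)\bigr\|_2^2$.

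Next I would expand the right-hand side with $a := x_t - x^\star$ and $c := x_{t-1}-x^\star$ as $(1+\beta)^2\|a\|_2^2 - 2\beta(1+\beta)\langle a,c\rangle + \beta^2\|c\|_2^2$, and bound the cross term by Young's inequality $-2\langle a,c\rangle \le s\|a\|_2^2 + \tfrac1s\|c\|_2^2$ with the \emph{tuned} choice $s = \tfrac{4-(1+\beta)^2}{\beta(1+\beta)}$, which is strictly positive for $\beta\in(0,1)$ since then $(1+\beta)^2<4$ (the $\beta=0$ case being immediate). A short computation shows this makes the $\|a\|_2^2$-coefficient collapse to exactly $4$ and the $\|c\|_2^2$-coefficient to $\beta^2\bigl(1+\tfrac{(1+\beta)^2}{4-(1+\beta)^2}\bigr)=\tfrac{4\beta^2}{4-(1+\beta)^2}$, giving
\[
\|x_{t+1}^{+}-x^\star\|_2^2 \le \tfrac{4}{(1+\eta\mu)^2}\|x_t-x^\star\|_2^2 + \tfrac{4\beta^2}{(1+\eta\mu)^2\,(4-(1+\beta)^2)}\|x_{t-1}-x^\star\|_2^2 .
\]

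For the noise step, since $x_{t+1} = x_{t+1}^{+} - \eta\varepsilon_{t+1}$ and $x_{t+1}^{+}$ is $\mathcal{F}_t$-measurable (it solves an implicit equation in $x_t,x_{t-1}$ only), I expand $\|x_{t+1}-x^\star\|_2^2 = \|x_{t+1}^{+}-x^\star\|_2^2 - 2\eta\langle x_{t+1}^{+}-x^\star,\varepsilon_{t+1}\rangle + \eta^2\|\varepsilon_{t+1}\|_2^2$ and take $\mathbb{E}[\,\cdot\mid\mathcal{F}_t]$: the cross term vanishes because $\mathbb{E}[\varepsilon_{t+1}\mid\mathcal{F}_t]=0$, and the last term is at most $\eta^2\sigma^2$ by Assumption~\ref{assump:bounded-var}. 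Substituting the deterministic estimate and taking total expectation via the tower property yields \eqref{eq:onestep-acc-stoc-prox}.

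The only genuinely non-routine ingredient is the choice of the Young's-inequality parameter $s$, which is rigged precisely so the two coefficients come out as stated; everything else is bookkeeping. The points to be careful about are the measurability of $x_{t+1}^{+}$ with respect to $\mathcal{F}_t$ (so that the noise cross-term is mean-zero), and keeping rather than discarding the $\eta^2\|\nabla f(x_{t+1}^{+})\|_2^2$ term --- that is what upgrades the contraction factor from $1/(1+2\eta\mu)$ to $1/(1+\eta\mu)^2$.
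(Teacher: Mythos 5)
Your proof is correct and follows essentially the same route as the paper's: a deterministic contraction estimate for the auxiliary iterate $x_{t+1}^{+}$ using strong convexity twice to produce the $(1+\eta\mu)^2$ factor, a Young's inequality tuned exactly as the paper's choice $\zeta = \tfrac{4}{(1+\beta)^2}-1$ (your $s$ is the same parameter in different clothing), and then the mean-zero/bounded-variance noise step for $x_{t+1}=x_{t+1}^{+}-\eta\varepsilon_{t+1}$. The only differences are organizational: you move $\eta\nabla f(x_{t+1}^{+})$ to the left-hand side before squaring instead of expanding and back-substituting (which is slightly cleaner, avoiding the cancellation of the $\beta\eta(x_t-x_{t-1})^\top\nabla f(x_{t+1}^{+})$ terms), and you obtain $\|\nabla f(x)\|_2 \ge \mu\|x-x^\star\|_2$ directly from Cauchy--Schwarz rather than through the paper's Lemma~\ref{lemma:pl-condition}.
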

Notice that all terms --except the last one-- are divided by $(1+\eta \mu)^2.$ 
Thus, large step size $\eta$ help convergence (to a neighborhood), reminiscent of the convergence behavior of PPA in \eqref{eq:ppm-conv-rate-guller}. 
Based on \eqref{eq:onestep-acc-stoc-prox}, 
we can write the following $2\times 2$ system that characterizes the progress of SPPAM: 
\begin{align}
\label{eq:two-by-two-onestep}
\begin{bmatrix}
  \ex{ \|x_{t+1} - x^\star \|_2^2 } \\
  \ex{ \|x_t - x^\star \|_2^2 }
\end{bmatrix}
&\leq
\underbrace{
\begin{bmatrix}
  \frac{4}{(1+\eta \mu)^2} & \frac{4\beta^2}{(1+\eta \mu)^2\left( 4-(1+\beta)^2 \right)} \\
  1 & 0
\end{bmatrix}
}_A
\cdot
\begin{bmatrix}
  \ex{ \|x_t - x^\star \|_2^2 } \\
  \ex{ \|x_{t-1} - x^\star \|_2^2 }
\end{bmatrix}
+ 
\begin{bmatrix}
  \eta^2 \sigma^2 \\
  0
\end{bmatrix}.
\end{align}

Therefore, the spectral radius of the contraction matrix $A$ (asymptotically) determines the convergence rate, as in {\citep{goh2017why}}. This is summarized in the following lemma:
\begin{lemma} \label{lem:SPPAM-contraction}
The maximum eigenvalue of $A$,
which determines the convergence rate of SPPAM, is:
\begin{align}
\label{eq:acc-stoc-ppa-conv-rate}
    \tfrac{2}{(1+\eta \mu)^2} + \sqrt{ \tfrac{4}{(1+\eta \mu)^4} + \tfrac{4\beta^2}{(1+\eta \mu)^2(4 - (1+\beta)^2)}}.
\end{align}
\end{lemma}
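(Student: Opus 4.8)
The plan is to compute the eigenvalues of the $2\times 2$ matrix
\[
A = \begin{bmatrix} a & b \\ 1 & 0 \end{bmatrix}, \qquad a = \tfrac{4}{(1+\eta\mu)^2}, \quad b = \tfrac{4\beta^2}{(1+\eta\mu)^2(4-(1+\beta)^2)},
\]
directly from its characteristic polynomial. Since $A$ is a companion-type matrix, $\det(\lambda I - A) = \lambda^2 - a\lambda - b$, so the eigenvalues are $\lambda_{\pm} = \tfrac{a}{2} \pm \sqrt{\tfrac{a^2}{4} + b}$. The larger one is $\lambda_+ = \tfrac{a}{2} + \sqrt{\tfrac{a^2}{4}+b}$, and substituting the expressions for $a$ and $b$ gives exactly \eqref{eq:acc-stoc-ppa-conv-rate}: $\tfrac{a}{2} = \tfrac{2}{(1+\eta\mu)^2}$, $\tfrac{a^2}{4} = \tfrac{4}{(1+\eta\mu)^4}$, and $b$ is already in the stated form.

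The remaining point is to justify that $\lambda_+$ is indeed the \emph{maximum} eigenvalue in the relevant sense — i.e., that it is a nonnegative real number and dominates $|\lambda_-|$, so that it governs the asymptotic decay of the iterates in \eqref{eq:two-by-two-onestep}. For this I would note that $a > 0$ and $b \geq 0$ whenever the momentum parameter satisfies $4 - (1+\beta)^2 > 0$, i.e. $\beta < 1$, which holds under the standing assumption $\beta \in [0,1)$. Hence the discriminant $\tfrac{a^2}{4} + b$ is nonnegative, both eigenvalues are real, $\lambda_+ \geq 0$, and $\lambda_+ \geq |\lambda_-|$ because $\sqrt{a^2/4 + b} \geq |a/2| \cdot \text{(something)}$ — more simply, $\lambda_+ - |\lambda_-| = \tfrac{a}{2} + \sqrt{\tfrac{a^2}{4}+b} - \left|\tfrac{a}{2} - \sqrt{\tfrac{a^2}{4}+b}\right| = \min\{a, 2\sqrt{a^2/4+b}\} \geq 0$. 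Thus $\lambda_+$ is the spectral radius of $A$, and standard linear-recurrence arguments (as invoked via the reference to \citep{goh2017why}) show it determines the contraction rate.

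The main obstacle is not the algebra — that is a one-line characteristic-polynomial computation — but rather being careful about the sign of $b$ and the case analysis: the formula is only meaningful, and $A$ only genuinely contractive, when $4 - (1+\beta)^2 > 0$ and when $a/2 + \sqrt{a^2/4+b} < 1$, which ties back to the step-size/momentum conditions analyzed elsewhere (Corollary~\ref{cor:acc-condition} and Theorem~\ref{thm:init-discount-condition}). For the purposes of this lemma I would simply present the eigenvalue computation and the dominance argument, flagging that the contraction regime (when \eqref{eq:acc-stoc-ppa-conv-rate} is $<1$) is the subject of the subsequent results.
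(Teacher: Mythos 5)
Your proposal is correct and follows essentially the same route as the paper: compute the eigenvalues of the $2\times 2$ contraction matrix from its characteristic polynomial (the paper uses the standard $\tfrac{(a+d)\pm\sqrt{(a+d)^2-4(ad-bc)}}{2}$ formula, which is the same computation) and identify the ``$+$'' root as the maximum since all terms are positive. Your extra care in verifying $\lambda_+ \geq |\lambda_-|$ via $b \geq 0$ for $\beta \in [0,1)$ is a slightly more explicit justification than the paper's one-line remark, but it is not a different argument.
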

Notice the one-step contraction factor in \eqref{eq:acc-stoc-ppa-conv-rate} is of order $O(1/\eta^2),$ exhibiting acceleration compared to that of SPPA for strongly convex objectives \citep{toulis_proximal_2021}: $1/(1+2\eta \mu) \approx O(1/\eta).$ 
However, due to the additional terms, it is not immediately obvious when SPPAM enjoys faster contraction than SPPA.
We thus characterize this condition more precisely in the following corollary:
\begin{corollary} \label{cor:acc-condition}
For $\mu$-strongly convex $f$, SPPAM 
enjoys smaller contraction factor than SPPA if: 
\begin{small}
\begin{align*}
    \frac{4\beta^2}{4 - (1+\beta)^2} <  \frac{\eta^2 \mu^2 - 6\eta\mu - 3}{(1+\eta \mu)^2}. 
\end{align*}
\end{small}
\end{corollary}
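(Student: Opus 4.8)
The claim is a head-to-head comparison of two explicit scalars, so I would simply line them up and reduce. By Lemma~\ref{lem:SPPAM-contraction} the SPPAM contraction factor is
\[
\rho_{\mathrm{SPPAM}} \;=\; \frac{2}{(1+\eta\mu)^2} \;+\; \sqrt{\frac{4}{(1+\eta\mu)^4} + \frac{4\beta^2}{(1+\eta\mu)^2\bigl(4-(1+\beta)^2\bigr)}},
\]
while, as recalled just above following \cite{toulis_proximal_2021}, the SPPA contraction factor for $\mu$-strongly convex $f$ is $\rho_{\mathrm{SPPA}} = \tfrac{1}{1+2\eta\mu}$. The whole task is to characterize when $\rho_{\mathrm{SPPAM}} < \rho_{\mathrm{SPPA}}$, and the corollary is exactly the reduced form of this inequality.

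First I would move the non-radical part of $\rho_{\mathrm{SPPAM}}$ to the right, rewriting the target as $\sqrt{R} < \rho_{\mathrm{SPPA}} - \tfrac{2}{(1+\eta\mu)^2}$, where $R$ is the radicand above. For this to be feasible the right side must be positive, which forces $\rho_{\mathrm{SPPA}} > \tfrac{2}{(1+\eta\mu)^2}$, i.e.\ $\eta$ (equivalently $\eta\mu$) large enough. This is precisely the regime in which a large proximal step size has already conditioned the problem, so acceleration is to be expected. In that regime both sides are nonnegative, so squaring is an equivalence, and I would square.

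After squaring and rearranging, the $\tfrac{4}{(1+\eta\mu)^4}$ term cancels between the two sides, leaving an upper bound on $\tfrac{4\beta^2}{(1+\eta\mu)^2(4-(1+\beta)^2)}$ whose numerator is a difference of two perfect squares, say $C^2-D^2$. The one genuinely useful step is to factor this as $(C-D)(C+D)$: after expansion one factor collapses to a pure power of $(1+\eta\mu)$ and the other is exactly $\eta^2\mu^2 - 6\eta\mu - 3$. Cancelling the common powers of $(1+\eta\mu)$ then leaves precisely
\[
\frac{4\beta^2}{4-(1+\beta)^2} \;<\; \frac{\eta^2\mu^2 - 6\eta\mu - 3}{(1+\eta\mu)^2},
\]
which is the stated condition. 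As a consistency check: the left side is $\ge 0$ for $\beta\in[0,1)$, so the inequality forces $\eta^2\mu^2 - 6\eta\mu - 3 > 0$, i.e.\ $\eta\mu > 3 + 2\sqrt{3}$, which is the sharp quantitative form of the ``$\eta$ large'' requirement that already appeared in the isolation step.

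The main obstacle here is organizational rather than conceptual: the expression right after squaring is a rational function carrying $(1+\eta\mu)$-powers, the $1+2\eta\mu$ factor, and $4-(1+\beta)^2$ in its denominators, and the entire reduction hinges on recognizing and executing the difference-of-squares factorization that makes everything telescope to the compact form above — a single sign slip in expanding $C^2-D^2$ destroys the cancellation. One should also be explicit that the squaring step is an equivalence only under the positivity of $\rho_{\mathrm{SPPA}} - \tfrac{2}{(1+\eta\mu)^2}$, so the clean polynomial inequality in the corollary is to be read together with that implicit large-$\eta$ regime; this is where I would spend the most care.
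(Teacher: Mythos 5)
Your overall route is the same as the paper's: take the SPPA factor $\tfrac{1}{1+2\eta\mu}$ and the Lemma~\ref{lem:SPPAM-contraction} eigenvalue, isolate the square root, square under a large-$\eta\mu$ positivity proviso (the paper does this via a footnote), cancel the common $\tfrac{4}{(1+\eta\mu)^4}$ term, and rearrange. So there is no difference in approach; the issue is in the final algebraic claim.

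Concretely, after the cancellation the surviving left-hand side is $\tfrac{1}{(1+2\eta\mu)^2}-\tfrac{4}{(1+\eta\mu)^2(1+2\eta\mu)}$ (note this is of the form $C^2-2CD$, a cross term, not a difference of squares $C^2-D^2$: the $D^2$ piece is what cancels against the radicand). Multiplying through by $(1+\eta\mu)^2$ gives
\begin{align*}
\frac{(1+\eta\mu)^2-4(1+2\eta\mu)}{(1+2\eta\mu)^2}
=\frac{\eta^2\mu^2-6\eta\mu-3}{(1+2\eta\mu)^2}
>\frac{4\beta^2}{4-(1+\beta)^2},
\end{align*}
so the denominator that survives is $(1+2\eta\mu)^2$, not $(1+\eta\mu)^2$. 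This matches the last line of the paper's own proof; the $(1+\eta\mu)^2$ in the corollary statement appears to be a typo, and your assertion that the factorization ``leaves precisely'' the stated form glosses over this. The distinction matters for the logic: since $(1+2\eta\mu)^2>(1+\eta\mu)^2$, the stated condition is \emph{weaker} than the derived one, so as written it does not by itself imply $\rho_{\mathrm{SPPAM}}<\rho_{\mathrm{SPPA}}$ --- the correct sufficient condition is the $(1+2\eta\mu)^2$ version. On the positive side, your handling of the squaring step is more careful than the paper's: validity requires $\tfrac{1}{1+2\eta\mu}>\tfrac{2}{(1+\eta\mu)^2}$, i.e.\ $\eta\mu>1+\sqrt{2}$ (the paper's footnote assumes only $\eta\mu>1$), and your remark that the final inequality already forces $\eta\mu>3+2\sqrt{3}$ subsumes this requirement.
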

In words, for a fixed step size $\eta$ and given a strongly convex parameter $\mu$, there is a range of momentum parameters $\beta$ that exhibits acceleration compared to SPPA. 

\begin{remark}
In contrast to (stochastic) gradient method analyses in convex optimization, where acceleration is usually shown by improving the dependency on the condition number from $\kappa = \tfrac{L}{\mu}$ to $\sqrt{\kappa},$ such a claim can hardly be made for stochastic proximal point methods. This is also the case in deterministic setting; see \eqref{eq:ppm-conv-rate-guller} and \eqref{eq:ppm-acc-conv-rate-guller}. 
As shown in Theorem~\ref{thm:onestep-acc-stoc-prox}, our convergence analysis of SPPAM does not depend on $L$-smoothness at all. This robustness of SPPAM is also confirmed in numerical simulations in Section~\ref{sec:experiments}, where SPPAM exhibits the fastest convergence rate, virtually independent of the different settings considered.
\end{remark}

We now formalize the convergence behavior of SPPAM.
In particular, we characterize the condition that leads to an exponential discount of the initial conditions.
By unrolling the recursion of SPPAM in \eqref{eq:two-by-two-onestep} for $T$ iterations, we obtain: 
\begin{align}
\begin{bmatrix}
  \ex{ \|x_T - x^\star \|_2^2 } \\
  \ex{ \|x_{T-1} - x^\star \|_2^2 } 
\end{bmatrix}
&\leq 
A^T
\cdot
\begin{bmatrix}
   \|x_0 - x^\star \|_2^2 \\
   \|x_{-1} - x^\star \|_2^2 
\end{bmatrix} 
+ 
\left(
\sum_{i=1}^{T-1}A^i\right)
\begin{bmatrix}
   1 \\
   0
\end{bmatrix} 
\eta^2 \sigma^2. \nonumber 
\end{align}  
%
It is clear from the above that the convergence is determined by $A^T$ and $\left(\sum_{i=1}^{T-1} A^i\right),$ where $A$ was defined in \eqref{eq:two-by-two-onestep}. Our next theorem derives convergence to a neighborhood based on the spectrum of these quantities, akin to \citet[Theorem 1]{assran_convergence_2020} and \citet[Theorem 3]{toulis_proximal_2021}.

\begin{theorem} \label{thm:lin-conv}
For $\mu$-strongly convex $f$, assume SPPAM 
is initialized with $x_0 = x_{-1}$. Then, after $T$ iterations, we have:
\begin{align}  \label{eq:Tstep-acc-stoc-prox}
   &\ex{ \|x_T - x^\star \|_2^2 } 
    \leq  \frac{2 \sigma_1^T}{\sigma_1 - \sigma_2} 
    \left(  \left( \|x_0-x^\star \|_2^2 + \tfrac{\eta^2 \sigma^2}{1-\theta}\right) \cdot (1+\theta) \right)   
    +
    \frac{\eta^2 \sigma^2}{1-\theta}, 
\end{align} 
where 
$\theta=\frac{4}{(1+\eta\mu)^2} + \tfrac{4\beta^2}{(1+\eta\mu)^2(4-(1+\beta)^2)}.$
Here, $\sigma_{1, 2}$ are the eigenvalues of $A$, and
\begin{align} \label{eq:discount-init}
    \tfrac{2 \sigma_1^T}{\sigma_1 - \sigma_2}  =  \tau^{-1} 
    \cdot
    \left( \tfrac{2}{(1+\eta\mu)^2} + \tau \right)^T
    \quad \text{with} \quad 
    \tau = \sqrt{ \tfrac{4}{(1+\eta \mu)^4} + \tfrac{4\beta^2}{(1+\eta \mu)^2(4 - (1+\beta)^2)}}.
\end{align}
\end{theorem}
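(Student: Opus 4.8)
The plan is to unroll the $2\times 2$ recursion in \eqref{eq:two-by-two-onestep} and bound the two resulting pieces --- the transient term governed by $A^T$ and the accumulated noise term governed by $\sum_{i=1}^{T-1} A^i$ --- using an explicit diagonalization of $A$. First I would record the eigenvalues $\sigma_1 \ge \sigma_2$ of $A$; by Lemma~\ref{lem:SPPAM-contraction} we already have $\sigma_1 = \tfrac{2}{(1+\eta\mu)^2} + \tau$ with $\tau$ as in \eqref{eq:discount-init}, and $\sigma_1 \sigma_2 = \det A = -\tfrac{4\beta^2}{(1+\eta\mu)^2(4-(1+\beta)^2)} \le 0$, $\sigma_1+\sigma_2 = \mathrm{tr}\,A = \tfrac{4}{(1+\eta\mu)^2}$. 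Since $A$ has the companion-type structure $\begin{bmatrix} a & b \\ 1 & 0\end{bmatrix}$, its eigenvectors are $(\sigma_j, 1)^\top$, so one can write $A^T$ in closed form with entries that are linear combinations of $\sigma_1^T$ and $\sigma_2^T$ divided by $\sigma_1-\sigma_2$. The key elementary fact I would use is that for such a matrix all entries of $A^T$ are bounded by $\tfrac{2\sigma_1^T}{\sigma_1-\sigma_2}$ in absolute value (using $|\sigma_2|\le \sigma_1$), which is exactly the quantity $\tau^{-1}(\tfrac{2}{(1+\eta\mu)^2}+\tau)^T$ appearing in \eqref{eq:discount-init} after substituting $\sigma_1-\sigma_2 = 2\tau$ (from $\sigma_{1,2} = \tfrac{2}{(1+\eta\mu)^2}\pm\tau$).

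Next I would handle the two terms separately. For the transient term, the initialization hypothesis $x_0 = x_{-1}$ collapses the initial vector to $\|x_0-x^\star\|_2^2\,(1,1)^\top$, so applying the bound on the entries of $A^T$ and summing the (at most two) entries in the first row gives a contribution of the form $\tfrac{2\sigma_1^T}{\sigma_1-\sigma_2}\cdot 2\|x_0-x^\star\|_2^2$ up to the bookkeeping that produces the factor $(1+\theta)$; here $\theta$ is precisely the sum of the two entries in the top row of $A$ (equivalently $\theta = \mathrm{tr}\,A - \det A$ evaluated in the right combination), and it is the natural quantity controlling the row sums of the powers $A^i$. For the noise term, I would bound $\sum_{i=1}^{T-1}A^i (1,0)^\top$ entrywise: its first coordinate is dominated by a geometric series in the row-sum quantity $\theta$, giving $\le \tfrac{1}{1-\theta}$ provided $\theta<1$, and the leading-order transient part of this same sum again contributes a term of size $\tfrac{2\sigma_1^T}{\sigma_1-\sigma_2}\cdot\tfrac{\eta^2\sigma^2}{1-\theta}$. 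Collecting the transient pieces into the bracket $\big(\|x_0-x^\star\|_2^2 + \tfrac{\eta^2\sigma^2}{1-\theta}\big)(1+\theta)$ and the steady-state piece into $\tfrac{\eta^2\sigma^2}{1-\theta}$ yields \eqref{eq:Tstep-acc-stoc-prox}.

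The main obstacle I anticipate is the careful, sign-aware handling of the powers $A^i$: because $\det A \le 0$ the eigenvalue $\sigma_2$ is negative, so the entries of $A^i$ alternate in a way that forbids a naive "$A$ has nonnegative entries, so $A^i(1,1)^\top \le \sigma_1^i (1,1)^\top$" argument. One must instead work with the exact closed form $\tfrac{\sigma_1^i - \sigma_2^i}{\sigma_1-\sigma_2}$ and $\tfrac{\sigma_1^{i-1}-\sigma_2^{i-1}}{\sigma_1-\sigma_2}\cdot(-\det A)$ for the matrix entries, bound $|\sigma_1^i-\sigma_2^i|\le \sigma_1^i + |\sigma_2|^i \le 2\sigma_1^i$, and only then sum the geometric series --- which additionally requires verifying $\sigma_1<1$ (equivalently the hypothesis implicit in the statement that $\theta<1$, since $\sigma_1 \le \sqrt{\theta}<1$ when $\theta<1$... more precisely $\sigma_1^2 \le \theta$ by comparing $\sigma_1 = \tfrac{2}{(1+\eta\mu)^2}+\tau$ with $\tau^2 = \theta - \tfrac{4}{(1+\eta\mu)^4}$). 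Making the constant $(1+\theta)$ come out exactly right, rather than some other $O(1)$ factor, is the part that needs the most bookkeeping; everything else is a routine geometric-series estimate once the diagonalization is in place.
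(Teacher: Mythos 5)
Your plan follows essentially the same route as the paper's proof: unroll the $2\times 2$ recursion, use the closed form of $A^T$ in terms of $\sigma_1,\sigma_2$ (the paper invokes the same Fibonacci-type formula), bound the transient entries by $\tfrac{2\sigma_1^T}{\sigma_1-\sigma_2}$ with $\sigma_1-\sigma_2=2\tau$, and sum the noise term via $(I-A)^{-1}$, whose determinant $1-\theta$ and the top-row sum $1+\theta$ of $A+I$ (with $x_0=x_{-1}$) produce exactly the stated constants, under the implicit condition $|\sigma_{1,2}|<1$ that the paper also defers to Theorem~\ref{thm:init-discount-condition}. One small correction that does not affect the argument: since $A$ is entrywise nonnegative, the entries of $A^i$ do not alternate in sign (the sign issue lives only inside the eigenvalue expressions $\sigma_1^i-\sigma_2^i$ with $\sigma_2<0$), and the termwise bound of the $(1,1)$ entry of $A^i$ by $\theta^i$ is not valid in general, but the summed bound $\le\tfrac{1}{1-\theta}$ via the exact geometric series is, which is precisely how the paper proceeds.
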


The above theorem states that the term in \eqref{eq:discount-init} determines the discounting rate of the initial conditions.
In particular, the condition that leads to an exponential discount 
of the initial conditions 
is characterized by the following theorem:

\begin{theorem} \label{thm:init-discount-condition}
Let the following condition hold:
\begin{align} \label{eq:init-discount-condition}
    \tau = \sqrt{ \tfrac{4}{(1+\eta \mu)^4} + \tfrac{4\beta^2}{(1+\eta \mu)^2(4 - (1+\beta)^2)}} < \tfrac{1}{2}.
\end{align}
Then, for $\mu$-strongly convex $f$, initial conditions of SPPAM exponentially discount: i.e., in \eqref{eq:Tstep-acc-stoc-prox},  
\begin{align*}
    \tfrac{2 \sigma_1^T}{\sigma_1 - \sigma_2}  =
      \tau^{-1} \cdot   \left( \tfrac{2}{(1+\eta\mu)^2} + \tau \right)^T 
    =C^T, \quad\text{where}\quad C \in (0, 1).
\end{align*}
\end{theorem}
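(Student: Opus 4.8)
The plan is to show that under the hypothesis $\tau < \tfrac12$, the quantity $C := \tau^{-1}\bigl(\tfrac{2}{(1+\eta\mu)^2} + \tau\bigr)^T$... wait, more carefully: $C$ is the base of the exponential, so I want to identify $C = \tfrac{2}{(1+\eta\mu)^2} + \tau$ and prove $C \in (0,1)$, while separately noting the prefactor $\tau^{-1}$ is a harmless finite constant. Let me lay out the steps.

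\textbf{Step 1: Positivity.} Both summands in $C = \tfrac{2}{(1+\eta\mu)^2} + \tau$ are manifestly nonnegative ($\tau$ is a square root of a sum of nonnegative terms, using $4 - (1+\beta)^2 > 0$ which must hold for $\tau$ to be real and which I would note is implied by $\beta \in [0,1)$), and the first is strictly positive since $\eta, \mu > 0$. Hence $C > 0$ immediately.

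\textbf{Step 2: Upper bound $C < 1$.} This is the crux. I would observe that $\tfrac{4}{(1+\eta\mu)^4}$ is one of the two terms under the square root defining $\tau$, so $\tau \geq \tfrac{2}{(1+\eta\mu)^2}$, i.e. $\tfrac{2}{(1+\eta\mu)^2} \leq \tau$. Therefore
\begin{align*}
  C = \tfrac{2}{(1+\eta\mu)^2} + \tau \;\leq\; \tau + \tau \;=\; 2\tau \;<\; 2 \cdot \tfrac12 \;=\; 1,
\end{align*}
where the strict inequality is exactly the hypothesis \eqref{eq:init-discount-condition}. This gives $C < 1$.

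\textbf{Step 3: Assemble.} Combining Steps 1–2 gives $C \in (0,1)$. It remains to confirm that the prefactor $\tau^{-1}$ does not spoil the exponential decay: since $\tau > 0$ is a fixed constant independent of $T$, the expression $\tau^{-1} C^T \to 0$ geometrically, and in the statement of the theorem it is legitimate to absorb bookkeeping constants (as the theorem only claims the displayed equality $\tfrac{2\sigma_1^T}{\sigma_1-\sigma_2} = \tau^{-1}C^T$ with $C\in(0,1)$, which follows directly from \eqref{eq:discount-init} in Theorem~\ref{thm:lin-conv} together with Steps 1–2). One should also double-check consistency with Lemma~\ref{lem:SPPAM-contraction}: the maximum eigenvalue $\sigma_1$ of $A$ equals $\tfrac{2}{(1+\eta\mu)^2} + \tau = C$, so $\tau < \tfrac12$ is precisely a sufficient condition forcing the spectral radius of $A$ below $1$, which is the natural requirement for the unrolled recursion to contract.

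\textbf{Main obstacle.} There is essentially no hard analytic obstacle here — the argument is the elementary observation $\tau \geq \tfrac{2}{(1+\eta\mu)^2}$ followed by $C \leq 2\tau$. The only subtlety is careful handling of edge cases: verifying $4 - (1+\beta)^2 > 0$ so that $\tau$ is well-defined and real (needed for the second term under the root to be nonnegative), and confirming that $\tau \neq 0$ so $\tau^{-1}$ makes sense — both hold because $\tfrac{4}{(1+\eta\mu)^4} > 0$ strictly. I would state these as preliminary remarks before the three-line main computation.
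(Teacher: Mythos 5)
Your proposal is correct and follows essentially the same route as the paper: the crucial step in both is the elementary observation $\tfrac{2}{(1+\eta\mu)^2} \leq \tau$, which gives $\sigma_1 = \tfrac{2}{(1+\eta\mu)^2} + \tau \leq 2\tau < 1$ under the hypothesis \eqref{eq:init-discount-condition} (the paper phrases this as bounding the whole ratio by $2(2\tau)^{T-1}$, while you identify the base $C=\sigma_1$ directly and treat $\tau^{-1}$ as a harmless constant). Your closing remark that the condition also forces $|\sigma_{1,2}|<1$, as needed in the proof of Theorem~\ref{thm:lin-conv}, matches the paper's own observation.
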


\begin{remark}
The condition in \eqref{eq:init-discount-condition} is much easier to satisfy than accelerated SGD. 
E.g., as described below, the required condition for accelerated SGD to converge linearly to a neighborhood in strongly convex quadratic objective relies on knowing $\eta = \frac{1}{L}$ and momentum $\beta = \frac{\sqrt{\kappa}-1}{\sqrt{\kappa}+1}$ \citep{assran_convergence_2020}, where both $L$ and $\kappa$ are unknown in practice. 
While this is also true for SPPAM (i.e., $\mu$ is an unknown quantity), \eqref{eq:init-discount-condition} suggests that one can essentially set $\eta$ sufficiently large to ensure the exponential discount, even without knowing $\mu$ exactly.
\end{remark}

\begin{remark} \label{remark:stability-comparison}
Other works that study variants of accelerated SPPA \citep{kulunchakov_generic_2019, chadha_accelerated_2021} still require specific choices of step size and momentum (e.g., $\eta_t = \frac{1}{L + c_0\sqrt{t+1}}$, $\beta_t = \frac{2}{t+2}$ for the latter; see Table~\ref{tab:algos} for the former), similarly to accelerated SGD. 
\end{remark}

To provide more context of the condition in Theorem~\ref{thm:init-discount-condition}, we make an ``unfair" comparison of  \eqref{eq:init-discount-condition}, which holds for general strongly convex $f,$ to the condition that accelerated SGD requires for strongly convex \emph{quadratic objective} in \eqref{eq:obj-quad}.
\citet[Theorem 1]{assran_convergence_2020} show that Nesterov's accelerated SGD converges to a neighborhood at a linear rate for strongly convex quadratic objective if 
    $\max\{ \rho_\mu(\eta, \beta),~\rho_L(\eta, \beta) \} < 1$, 
where $\rho_\lambda(\eta, \beta)$ for $\lambda \in \{\mu, L\}$ is defined as:
\vspace{-0.1cm}
\begin{align} \label{eq:sgdm-spectral-rad}
\rho_\lambda(\eta, \beta) = 
\begin{cases}
\frac{|(1+\beta)(1-\eta \lambda)|}{2} + \frac{\sqrt{\Delta_\lambda}}{2} & \text{if}~\Delta_\lambda \geq 0, \\
\sqrt{\beta (1-\eta \lambda)} & \text{otherwise},
\end{cases} 
\end{align}
with $\Delta_\lambda = (1+\beta)^2 (1-\eta \lambda)^2 - 4\beta(1-\eta \lambda)$.
This condition for convergence can thus be divided into three cases, depending on the range of $\eta \lambda$. 
Define $\psi_{\beta, \eta, \lambda} = (1 + \beta)(1 - \eta \lambda)$. Then: 
\begin{align*}
\begin{cases}
 \eta \lambda \geq 1, &  \text{Converges if }-\psi_{\beta, \eta, \lambda} + \sqrt{\Delta_\lambda} < 2, \vspace{1mm} \\ 
 \frac{(1-\beta)^2}{(1+\beta)^2} \leq \eta \lambda < 1,
& \text{Always converges}, \vspace{1mm} \\
 \eta \lambda < \frac{(1-\beta)^2}{(1+\beta)^2}, &\text{Converges if }\psi_{\beta, \eta, \lambda} + \sqrt{\Delta_\lambda}< 2 .
\end{cases}
\end{align*}

Now, consider the standard momentum value $\beta = 0.9.$ 
For the first case, the convergence requirement translates to 
$1 \leq \eta \lambda \leq \tfrac{24}{19}.$ 
The second range is given by $\frac{1}{361} \leq \eta \lambda < 1$. 
The third condition is lower bounded by 2 for $\beta=0.9,$ leading to divergence.
Combining, accelerated SGD requires $0.0028 \approx \frac{1}{361} \leq \eta \lambda \leq \frac{24}{19} \approx 1.26$ to converge for strongly convex quadratic objectives, set aside that this bound has to satisfy for (unknown) $\mu$ or $L$.
Albeit an unfair comparison, for general strongly convex objective, \eqref{eq:init-discount-condition} becomes $\eta \mu > 4.81$ for $\beta=0.9.$
Even though $\mu$ is unknown, one can see this condition is easy to satisfy, by using a sufficiently large step size $\eta$.

\section{Experiments}
\label{sec:experiments}

In this section, we perform numerical experiments to study the convergence behaviors of SPPAM, SPPA, SGDM, and SGD, using generalized linear models (GLM) {\cite{nelder1972generalized}}. 
Let $b_i \in \mathbb{R}$ be the label, $a_i \in \mathbb{R}^{p}$ be the features, and $x^\star \in \mathbb{R}^p$ be the model parameter of interest. GLM assumes that $b_i$ follows an exponential family distribution: $b_i \mid a_i \sim \exp \left( \frac{\gamma b_i - c_1(\gamma)}{\omega} c_2(b_i, \omega)  \right).$
Here, $\gamma=\langle a_i, x^\star \rangle$ is the linear predictor, $\omega$ is the dispersion parameter related to the variance of $b_i$, and $c_1(\cdot)$ and $c_2(\cdot)$ are known real-valued functions. 
GLM subsumes a wide family of models including linear, logistic, and Poisson regressions. Different models connects the linear predictor $\gamma=\langle a_i, x^\star \rangle$ through different \textit{mean} functions $h(\cdot)$.
We focus on linear and Poisson regression models, where mean functions are defined respectively as $h(\gamma) = \gamma$ and $h(\gamma) = e^\gamma$.
The former is an ``easy'' case, where objective is strongly convex, satisfying Assumption~\ref{assump:str-convex}. The latter is a ``hard'' case with non-Lipschitz continuous gradients, where SGD and SGDM are expected to suffer.

{\cite{toulis_statistical_2014}} introduced an efficient, exact implementation of SPPA for GLM. We adapt this procedure to SPPAM in \eqref{eq:acc-stoc-ppa}; see Algorithm~\ref{alg:sppam-glm}. Its derivation can be found in \cite{kim2021convergence}.

\begin{wrapfigure}{L}{0.5\textwidth}
\vspace{-0.5cm}
\begin{minipage}{0.5\textwidth}
  \begin{algorithm}[H]
    \caption{SPPAM for GLM}\label{alg:sppam-glm}
    \For{$t = 1, 2, \dots$}{
  Sample $i_t \sim$ Unif$(1, n)$ \\ 
  $r_t \gets \eta (b_{i_t} - h(\langle a_{i_t}, x_{t-1} \rangle)$\\
  $B_t \gets [0, r_t]$ \\
  \If{$r_t \leq 0$} {
  $B_t \gets [r_t, 0]$ 
  }
  $
  \begin{aligned}
    \xi_t = &\eta \big[ b_{i_t}  - h( (1+\beta)  \langle a_{i_t}, x_{t-1} \rangle \\
  & - \beta \langle a_{i_t}, x_{t-2} \rangle + \xi_t \cdot \|a_{i_t}\|_2^2 ) \big],~\xi_t \in B_t
  \end{aligned}
  $ \\
  $x_t \gets x_{t-1} + \xi_t \cdot a_{i_t} + \beta(x_{t-1} - x_{t-2})$ 
}
  \end{algorithm}
\end{minipage}
\vspace{-0.3cm}
\end{wrapfigure}

\begin{figure*}[!h]
    \centering
    \includegraphics[scale=0.32]{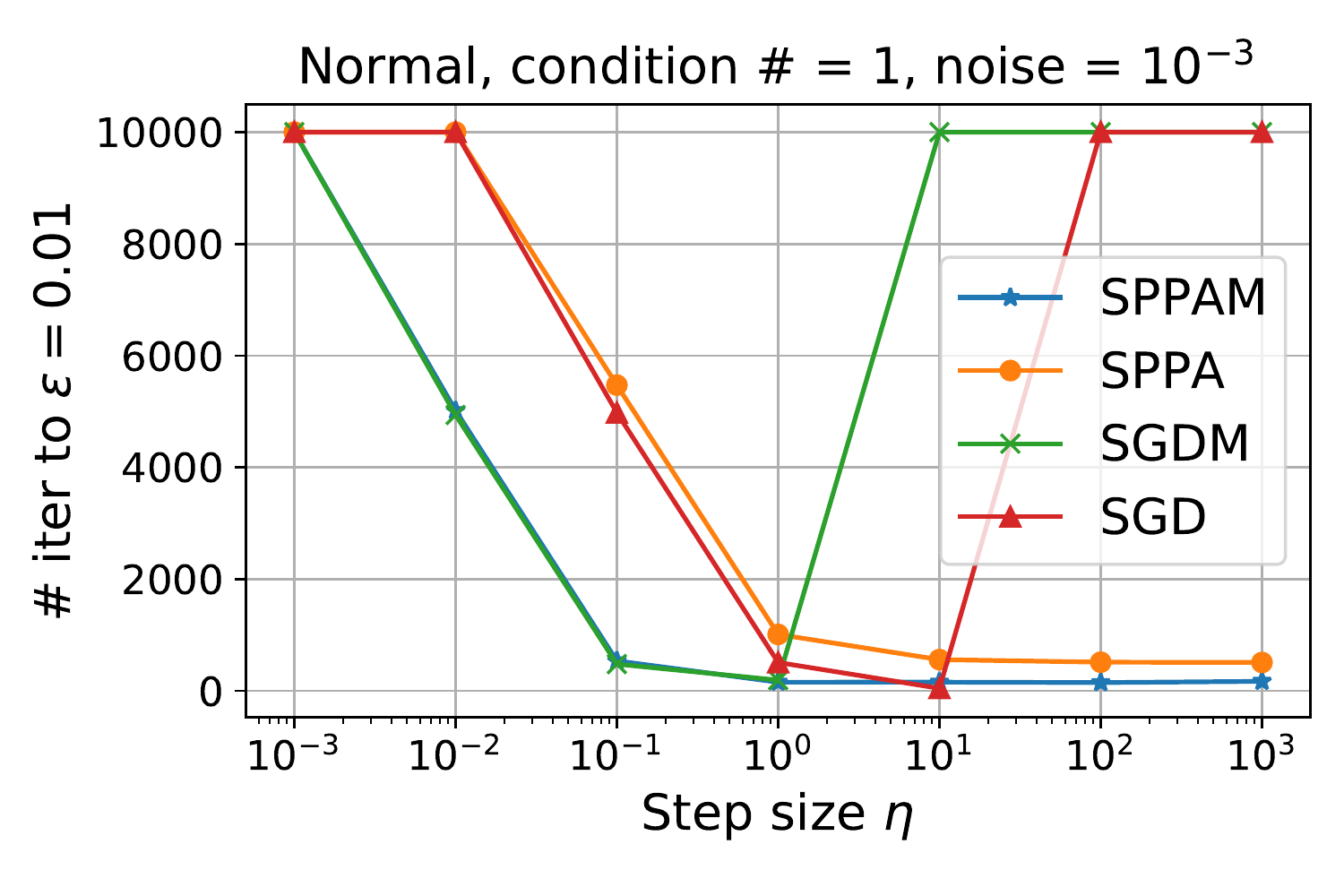}
    \includegraphics[scale=0.32]{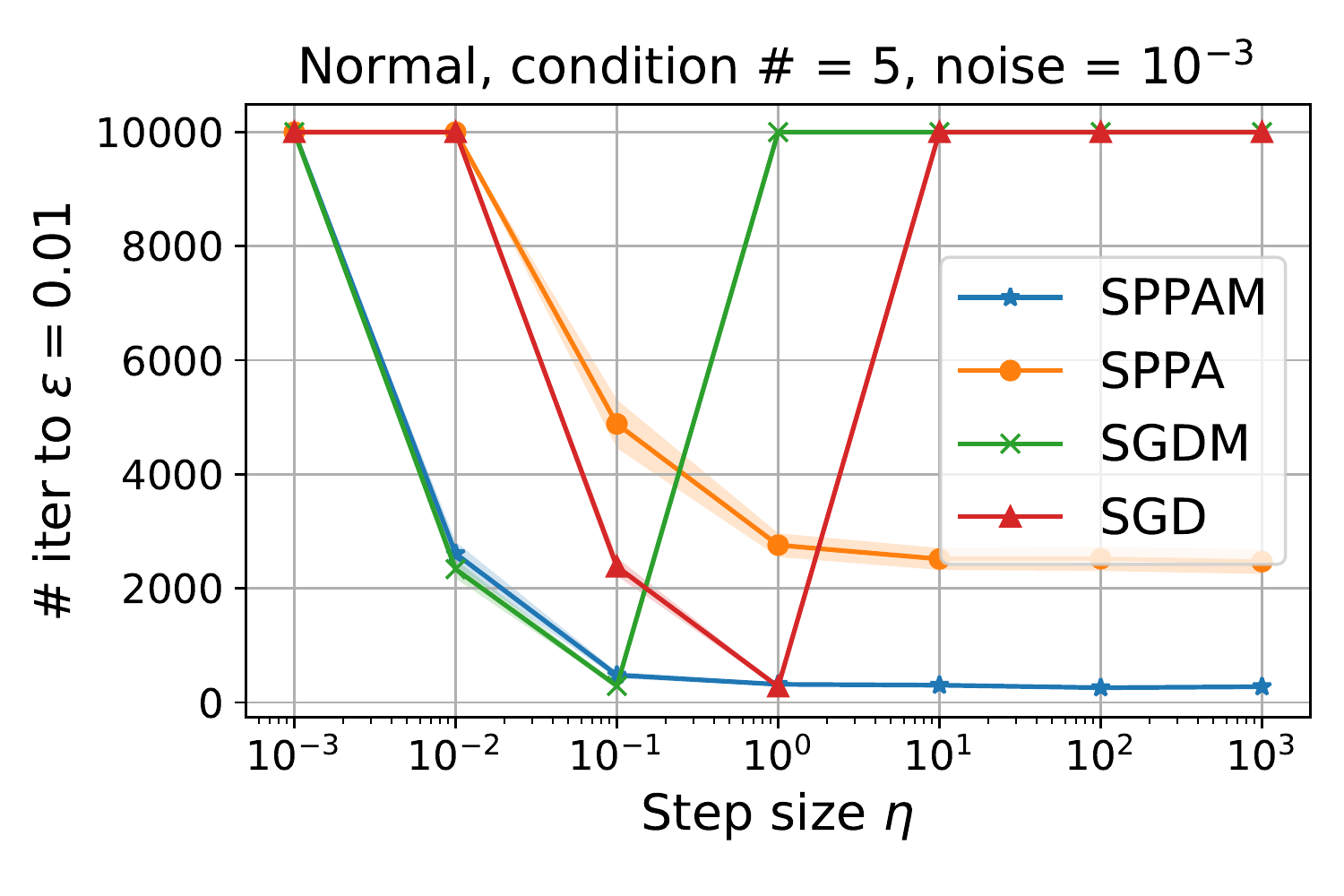}
    \includegraphics[scale=0.32]{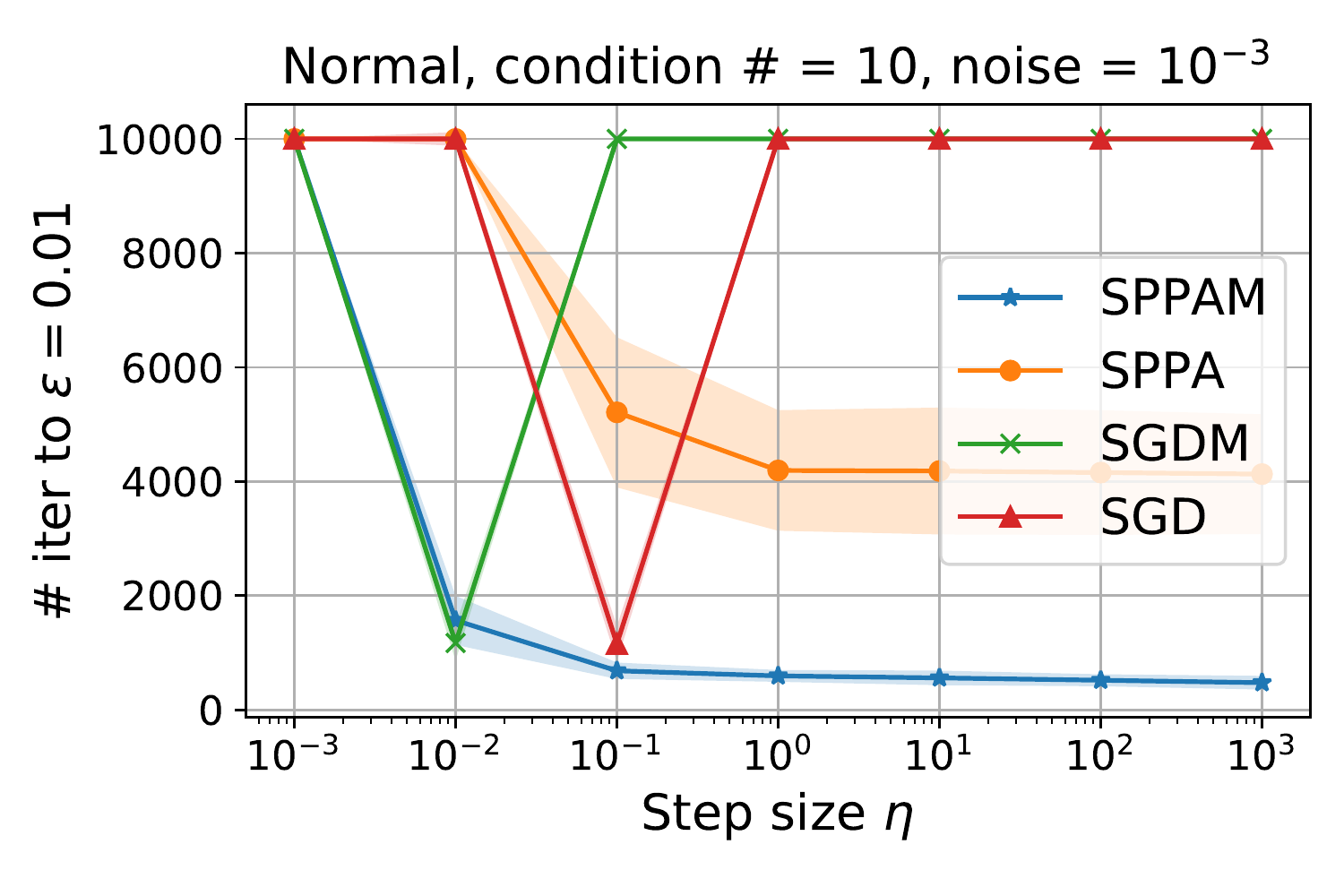}
    \includegraphics[scale=0.32]{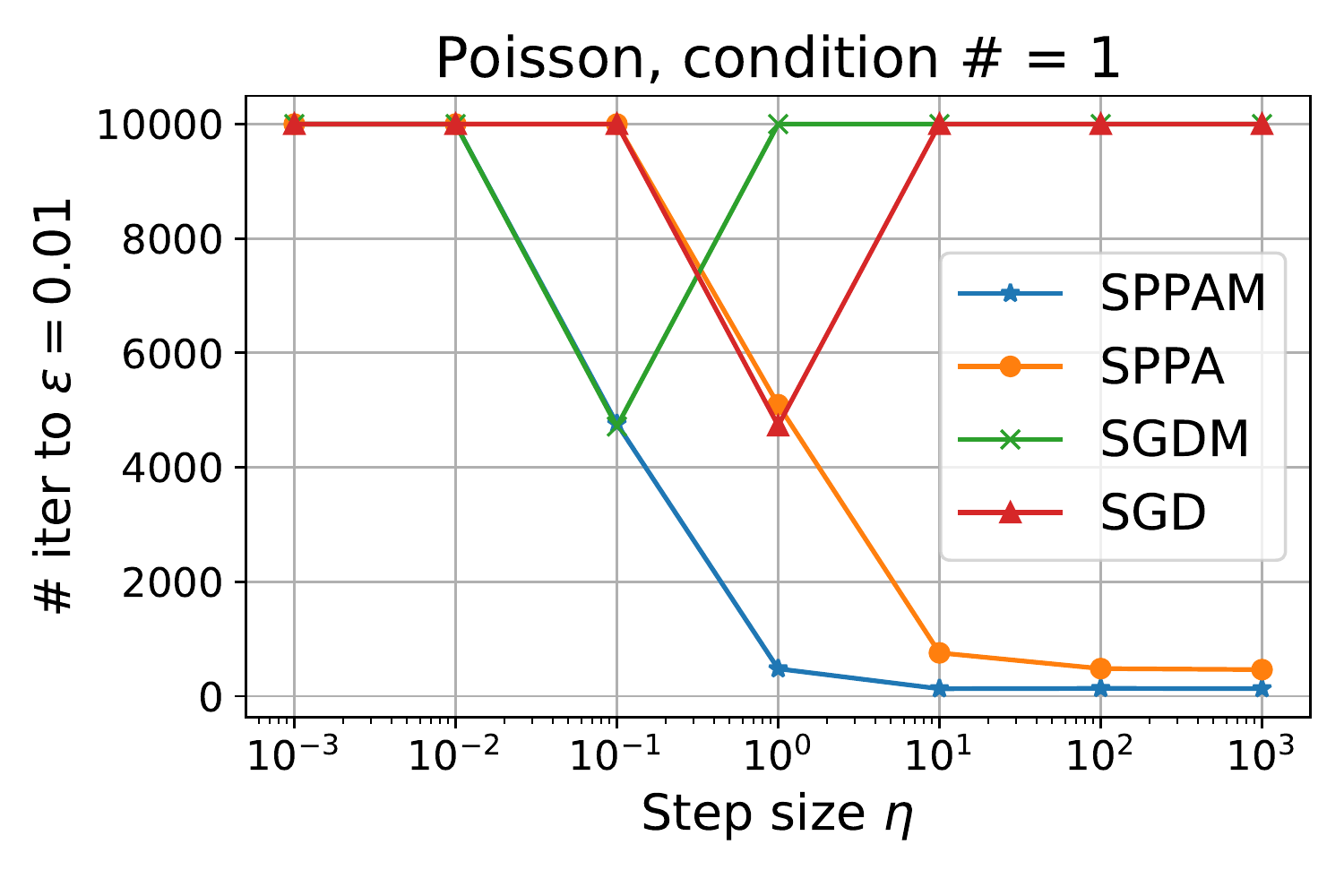}
    \includegraphics[scale=0.32]{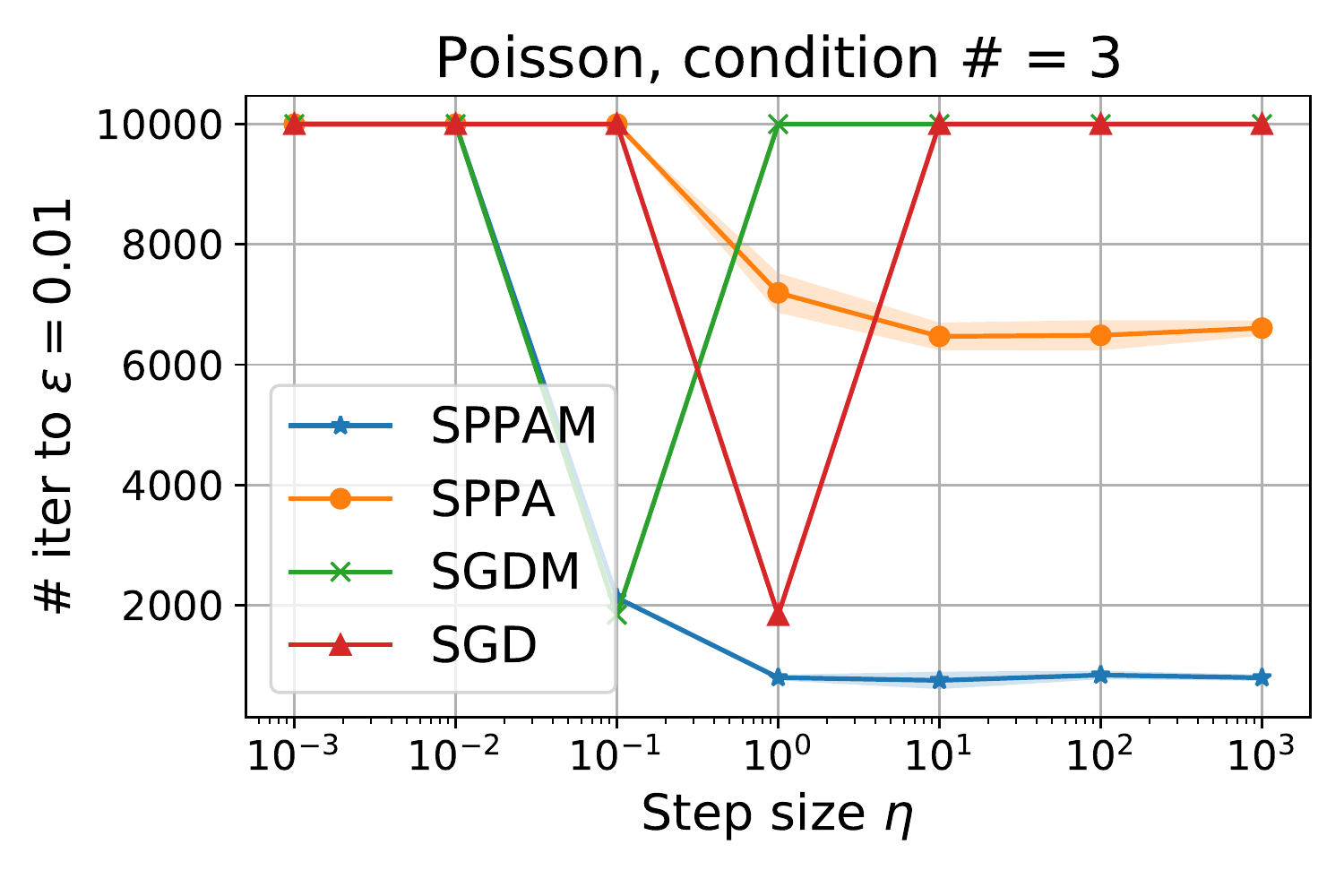}
    \includegraphics[scale=0.32]{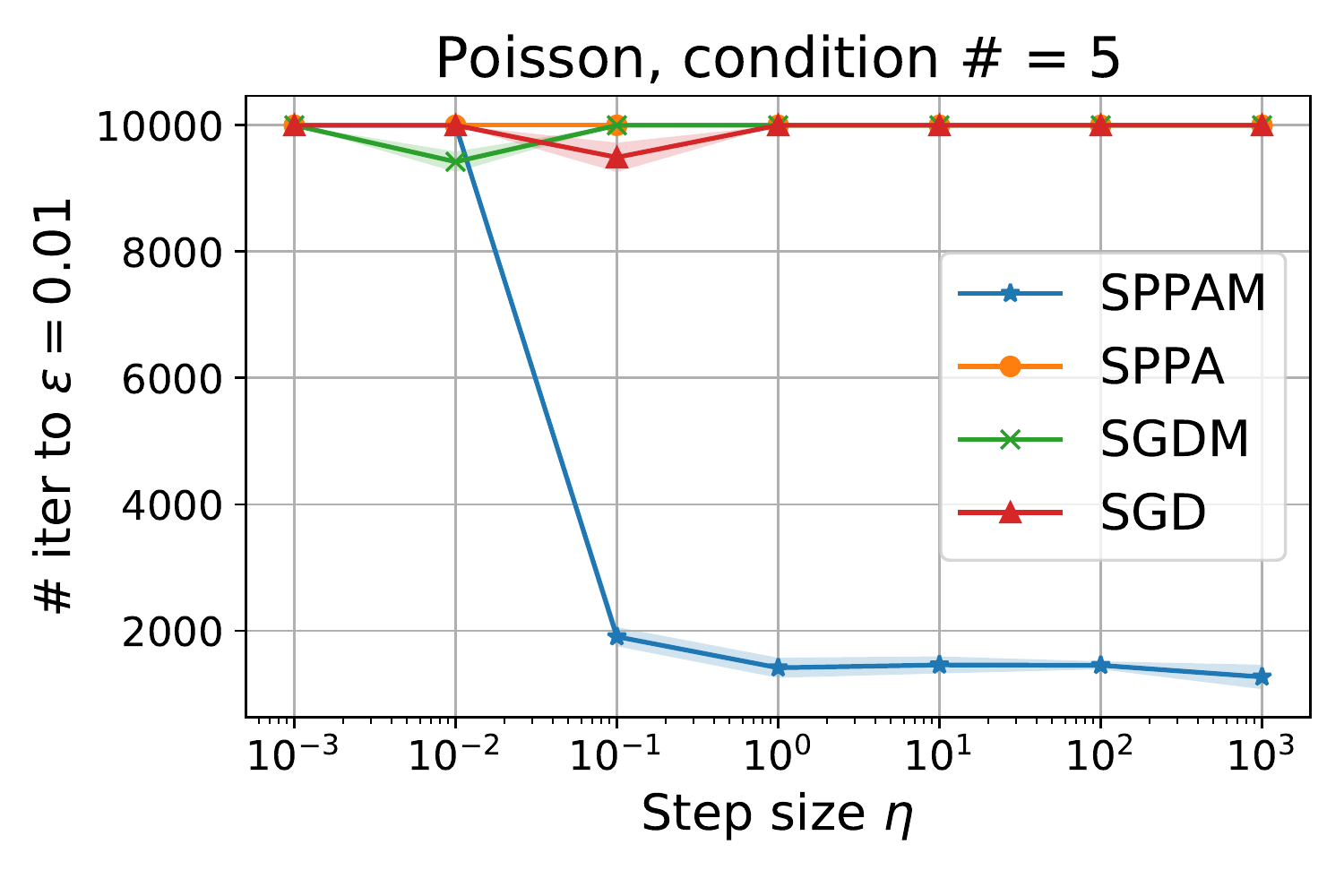}
    \caption{
    {\sf{Top}}: Linear regression with condition number $\kappa \in \{1, 5, 10\}$ with gaussian noise level $\texttt{1e-3}.$ 
    {\sf{Bottom}}: Poisson regression with condition number $\kappa \in \{1, 3, 5\}$.
    We set $p = n = 100$ in both cases. Batch size is 10 for all algorithms. The median number of iterations to reach $\varepsilon = 0.01$ is plotted. Shaded area are the standard deviations across 5 experiments.
    }
    \label{fig:regression}
\end{figure*} 
We generate the data as follows. $A \in \mathbb{R}^{p \times n}$ and $x^\star \in \mathbb{R}^p$ are drawn from $\mathcal{N}(0, I).$ For the normal case, we generate $b_i = \langle a_i, x^\star \rangle$, and for the Poisson case, we generate $b_i \sim \text{Poisson} (e^{\langle a_i, x^\star \rangle} )$ for $i = 1, \dots, n.$
For each experimental setup, we run SPPAM ({\sf \textcolor{blue}{blue}}), SPPA ({\sf \textcolor{orange}{orange}}), SGDM ({\sf \textcolor{teal}{green}}), and SGD ({\sf \textcolor{red}{red}}) for $10^4$ iterations. 
We repeat each experiment for 5 independent trials, and 
plot the median number of iterations to reach precision $\varepsilon \leq 10^{-2},$ along with the standard deviation.
We measure the precision 
$\varepsilon = \frac{\|b - \hat{b}\|_2^2}{\|b\|_2^2},$ 
where $b$ is the true label and $\hat{b}$ is the predicted label.

In Figure~\ref{fig:regression} ({\sf Top}), we present the results for the linear regression with different condition numbers, with gaussian noise level \texttt{1e-3}. We run each algorithm constant step size $\eta$ varying from $10^{-3}$ to $10^3$ with $10\times$ increment, and with $\beta=0.9$.
As expected, SGD and SGDM only converge for specific step size $\eta$, while SPPA and SPPAM converge for much wider ranges.
In terms of convergence rate, SPPAM converges faster than SPPA in all scenarios, which improves or matches the rate of SGDM, when it converges. 
As $\kappa$ increases, the range of $\eta$ that leads to convergence for SGD and SGDM shrinks; notice the sharper $``\lor"$ shape for SGD and SGDM for $\kappa = 10$ ({\sf 3rd}), compared to $\kappa=5$ ({\sf 2nd}) or $\kappa=1$ ({\sf 1st}).
SPPA also slightly slows down as $\kappa$ increases, while SPPAM converges essentially in the same manner for all scenarios.

Such trend is much more pronounced for the Poisson regression case presented in Figure~\ref{fig:regression} ({\sf Bottom}).
Due to the exponential mean function $h(\cdot)$ for Poisson model, the outcomes are extremely sensitive, and its likelihood does not satisfy standard assumptions like $L$-smoothness. As such, SGD and SGDM struggles with slow convergence even when $\kappa = 1$ ({\sf 1st}), while also exhibiting instability---each method converges only for a single choice of $\eta$ considered. 
Similar trend is shown when $\kappa=3$ ({\sf 2nd}) where SPPA starts slowing down.
For $\kappa = 5$ ({\sf 3rd}), all methods except for SPPAM did not make much progress in $10^4$ iterations, for the entire range of $\eta$ and $\beta$ considered. Quite remarkably, SPPAM still converges in the same manner without sacrificing both the convergence rate and the range of hyperparameters that lead to convergence.

\section{Conclusion}
We propose the stochastic proximal point algorithm with momentum (SPPAM), which directly incorporates Polyak's momentum inside the proximal step. We show that SPPAM converges to a neighborhood at a faster rate than stochastic proximal point algorithm (SPPA), and characterize the conditions that result in acceleration. Further, we prove linear convergence of SPPAM to a neighborhood, and provide conditions that lead to an exponential discount of the initial conditions, akin to SPPA. We confirm our theory with numerical simulations on linear and Poisson regression models; SPPAM converges for all the step sizes that SPPA converges, with a faster rate that matches or improves SGDM.

\acks{This work is supported by NSF FET:Small no.1907936, NSF MLWiNS CNS no.2003137 (in collaboration with Intel), NSF CMMI no.2037545, NSF CAREER award no.2145629, and Rice InterDisciplinary Excellence Award (IDEA).}

\bibliography{ref}

\newpage
\section*{Supplementary Material}

\appendix


\section{Proofs for Section \ref{sec:quad-model}}

In this section, we provide proofs for the propositions in Section \ref{sec:quad-model}. Proofs below utilize ``classical momentum'' form, which iterates:
\begin{align*}
    y_{t+1} &= \beta y_t + \nabla f(x_t) \\ 
    x_{t+1} &= x_t - \eta y_{t+1}.
\end{align*}
 This is equivalent to Polyak's momentum in the sense that, plugging in the first equation to the second, we get
 \begin{align*}
    x_{t+1} &= x_t - \eta y_{t+1} = x_t - \eta \nabla f(x_t) - \eta \beta y_t \\
    &= x_t - \eta \nabla f(x_t) + \beta (x_t - x_{t-1}),
\end{align*}
where the last equality is from the second equation of classical momentum.

\subsection*{Proof of Proposition \ref{prop:ppa}}

\begin{proof}
Recall PPA/IGD recursion. For quadratic problem in \eqref{eq:obj-quad}, the gradient can be computed in closed form:
\begin{align*}
    x_{t+1} &= x_t - \eta \nabla f(x_{t+1}) = x_t - \eta (Ax_{t+1} - b).
\end{align*}
Consider the eigenvalue decomposition of $A = QDQ^\top$ and the change of basis $z_t = Q^\top (x_t  - x^\star).$ Then, $Q z_t = x_t - x^\star$ and $Q z_t + x^\star = x_t.$ Also using $AQ = QD$ and $Ax^\star = b,$ we can write down the recursion above as:
\begin{align*}
    Q z_{t+1} + x^\star &= Q z_t + x^\star - \eta \left( A(Q z_{t+1} + x^\star) - b \right) \\
    &= Qz_t + x^\star - \eta Q D z_{t+1}  
\end{align*}

Multiplying $Q^\top$ on both sides,
\begin{align*}
    z_{t+1} + Q^\top x^\star &= z_t + Q^\top x^\star - \eta D z_{t+1} \Rightarrow \\
    (1+\eta D) z_{t+1} &= z_t.
\end{align*}

Writing the above component-wise, we get
\begin{align*}
    z_{t+1}^i &= \left( \frac{1}{1 + \eta \lambda_i}\right) \cdot z_t^i = \left( \frac{1}{1 + \eta \lambda_i} \right)^{t+1} \cdot z_0^i
\end{align*}
Going back to the change of basis, $z_t = Q^\top (x_t  - x^\star),$ we have the relation $x_t  - x^\star = Q z_t.$ Therefore, using the component-wise relation for $z_{t+1}^i$ above,
\begin{align*}
    x_t  - x^\star = Q z_t = \sum_{i=1}^n z_0^i \left( \frac{1}{1 + \eta \lambda_i} \right)^t q^i.
\end{align*}
Thus, in order for IGD to converge, one needs to satisfy $\left| \frac{1}{1+\eta \lambda_i} \right| < 1.$ 
\end{proof}

\subsection*{Proof of Proposition \ref{prop:ppam}}

\begin{proof}
Consider the ``classical momentum" form of PPAM:
\begin{equation}
\begin{split}
\label{eq:iGD-CM}
  y^{k+1} &= \beta y^k + \nabla f(x^{k+1}) \\
  x^{k+1} &= x^k - \eta y^{k+1}.  
\end{split}
\end{equation}
We perform change of basis: $z^k = Q^\top (x^k - x^\star)$ and $\phi^k = Q^\top y^k$.

For the first line of \eqref{eq:iGD-CM}, we have:
\begin{align*}
  y^{k+1} &= \beta y^k + \nabla f(x^{k+1}) \\
  Q^\top y^{k+1} &= \beta Q^\top y^k + Q^\top(A x^{k+1} - b) \\
  &= \beta Q^\top y^k + Q^\top (A (Qz^{k+1} + x^\star) - b) \\
  &= \beta Q^\top y^k + Q^\top (AQz^{k+1} + Ax^\star - b) \\
  &= \beta Q^\top y^k + Q^\top AQz^{k+1} \\
  &= \beta Q^\top y^k + DQ^\top Q z^{k+1} \\
  &= \beta Q^\top y^k + D z^{k+1} .
\end{align*}
Change of basis and writing component-wise, we get:
\begin{align*}
  \phi_i^{k+1} &= \beta \phi_i^k + \lambda_i z_i^{k+1} \\
  &= \beta \phi_i^k + \lambda_i (z_i^k - \eta \phi_i^{k+1}) \\
  &= \beta \phi_i^k + \lambda_i z_i^k - \eta \lambda_i  \phi_i^{k+1} \\
  (1+\eta \lambda_i ) \phi_i^{k+1} &= \beta \phi_i^k + \lambda_i z_i^k \\
  \phi_i^{k+1} &= \frac{\beta}{1+\eta \lambda_i }\phi_i^k + \frac{\lambda_i}{1+\eta \lambda_i } z_i^k.
\end{align*}

For the second line of \eqref{eq:iGD-CM}, we have:
\begin{align*}
  x^{k+1} &= x^k - \eta y^{k+1} \\
  Qz^{k+1} + x^\star &= Q z^k + x^\star - \eta Q \phi^{k+1} \\
  Q^\top Q z^{k+1} &= Q^\top Q z^k - \eta Q^\top Q \phi^{k+1} .
\end{align*}
Again, change of basis and writing component-wise, we get:
\begin{align*}
  z_i^{k+1} &= z_i^k - \eta \phi_i^{k+1}.
\end{align*}
Therefore, \eqref{eq:iGD-CM} can be written as, component-wise,
\begin{align*}
  \phi_i^{k+1} &= \frac{\beta}{1+\eta \lambda_i } \phi_i^k + \frac{\lambda_i}{1+\eta \lambda_i } z_i^k \\
  z_i^{k+1} &= z_i^k - \eta \phi_i^{k+1}.
\end{align*}
We can write above in matrix form:
\begin{align*}
\begin{bmatrix}
  1 & 0  \\
  \eta & 1
\end{bmatrix} \cdot
\begin{bmatrix}
  \phi_i^{k+1} \\
  z_i^{k+1}
\end{bmatrix} &=
\begin{bmatrix}
  \frac{\beta}{1+\eta \lambda_i } & \frac{\lambda_i}{1+\eta \lambda_i } \\
  0 & 1  
\end{bmatrix} \cdot
\begin{bmatrix}
  \phi_i^k \\
  z_i^k
\end{bmatrix}.
\end{align*}
Multiplying the inverse of the first matrix, i.e., $\begin{bmatrix}
  1 & 0  \\
  \eta & 1
\end{bmatrix}^{-1} =
\begin{bmatrix}
  1 & 0  \\
  -\eta & 1
\end{bmatrix}$ 
on both sides,
\begin{align*}
\begin{bmatrix}
  \phi_i^{k+1} \\
  z_i^{k+1}
\end{bmatrix} &=
\begin{bmatrix}
  1 & 0  \\
  -\eta & 1
\end{bmatrix} \cdot
\begin{bmatrix}
  \frac{\beta}{1+\eta \lambda_i } & \frac{\lambda_i}{1+\eta \lambda_i } \\
  0 & 1  
\end{bmatrix} \cdot
\begin{bmatrix}
  \phi_i^k \\
  z_i^k
\end{bmatrix} \\ &=
\begin{bmatrix}
  \frac{\beta}{1+\eta \lambda_i } & \frac{\lambda_i}{1+\eta \lambda_i } \\
  \frac{-\eta \beta}{1+\eta \lambda_i } & \frac{1}{1+\eta \lambda_i }
\end{bmatrix} \cdot
\begin{bmatrix}
  \phi_i^k \\
  z_i^k
\end{bmatrix}.
\end{align*}
Therefore, we can write the above as 
\begin{align*}
\begin{bmatrix}
  \phi_i^k \\
  z_i^k
\end{bmatrix} &=
R^k \cdot
\begin{bmatrix}
  \phi_i^0 \\
  z_i^0
\end{bmatrix}, \quad
R = \begin{bmatrix}
  \frac{\beta}{1+\eta \lambda_i} & \frac{\lambda_i}{1+\eta \lambda_i} \\
  \frac{-\eta \beta}{1+\eta \lambda_i} & \frac{1}{1+\eta \lambda_i}
\end{bmatrix}.
\end{align*}
To compute $R^k,$ we use the method presented in \cite{williams1992nth}. Then, 
denoting $\sigma_1$ and $\sigma_2$ as the eigenvalues of $R$, we have:
\begin{align*}
    R^k = 
    \begin{cases}
    \sigma_1^k R_1 - \sigma_2^k R_2  &\sigma_1 \neq \sigma_2 \\
    \sigma_1^k \left(k \frac{R}{\sigma_1} - (k-1)I \right) \quad &\sigma_1 = \sigma_2
    \end{cases},
    \quad
    R_j = \frac{R - \sigma_jI}{\sigma_1 - \sigma_2}.
\end{align*}
To get the convergence condition, we compute the eigenvalues of $R$ explicitly:
\begin{align*}
    \sigma_{1, 2} = \frac{1}{2} \left( \frac{\beta+1}{1+\eta \lambda_i} \pm \sqrt{ \left( \frac{\beta+1}{1+\eta \lambda_i} \right)^2 - \frac{4\beta}{1+\eta \lambda_i}} \right) .
\end{align*}

Now, to get the convergence criterion, we need to examine the conditions that lead to $\max \{ |\sigma_1|, |\sigma_2| \} < 1. $ If the eigenvalues computed above are complex, i.e.,
\begin{align*}
    \left( \frac{\beta+1}{1+\eta \lambda_i} \right)^2 - \frac{4\beta}{1+\eta \lambda_i} < 0 \implies |\sigma_1| = |\sigma_2| &= \sqrt{ \frac{1}{4}\left( \frac{\beta+1}{1+\eta \lambda_i} \right)^2 + \left\lvert \frac{1}{4} \left( \frac{\beta+1}{1+\eta \lambda_i} \right)^2 - \frac{\beta}{1+\eta \lambda_i} \right\rvert} \\
    & = \sqrt{\frac{\beta}{1+\eta \lambda_i}}.
\end{align*}
We need the above quantity to be less than $1$ to converge, so we need 
\begin{align*}
    \sqrt{\frac{\beta}{1+\eta \lambda_i}} < 1  \iff \beta - 1 < \eta \lambda_i \iff \eta > \frac{\beta - 1}{\lambda_i}.
\end{align*}

Now, if the eigenvalues are real, i.e.,
\begin{align*}
\left( \frac{\beta+1}{1+\eta \lambda_i} \right)^2 - \frac{4\beta}{1+\eta \lambda_i} \geq 0 \implies
    \max \{ |\sigma_1|, |\sigma_2| \} = \frac{1}{2} \max \left\{ \left\lvert \frac{\beta+1}{1+\eta \lambda_i} \pm \sqrt{ \left( \frac{\beta+1}{1+\eta \lambda_i} \right)^2 - \frac{4\beta}{1+\eta \lambda_i}} \right\rvert \right\}.
\end{align*}
Cases can be further divided into two. In the first case, when we have $\frac{\beta+1}{1+\eta \lambda_i} > 0$, we have $\sigma_1 \geq \sigma_2 \geq 0$, because the square-root term is non-negative. Therefore, to have $\max \{ |\sigma_1|, |\sigma_2| \} < 1,$  we need
\begin{align*}
    \sigma_1 =  \frac{1}{2} \left(  \frac{\beta+1}{1+\eta \lambda_i} + \sqrt{ \left( \frac{\beta+1}{1+\eta \lambda_i} \right)^2 - \frac{4\beta}{1+\eta \lambda_i}} \right) &< 1.
\end{align*}

In the second case, when we have $\frac{\beta+1}{1+\eta \lambda_i} < 0$, we have $|\sigma_2| \geq |\sigma_1|$. Therefore, to have $\max \{ |\sigma_1|, |\sigma_2| \} < 1,$  we need
\begin{align*}
    \sigma_2 =  \frac{1}{2} \left(  \frac{\beta+1}{1+\eta \lambda_i} - \sqrt{ \left( \frac{\beta+1}{1+\eta \lambda_i} \right)^2 - \frac{4\beta}{1+\eta \lambda_i}} \right) &> -1.
\end{align*}

\end{proof}

\section{Proofs for Section \ref{sec:theory}}

Recall the recursion of SPPAM:
\begin{align}
    x_{t+1}^+ 
    &= x_t - \eta \nabla f(x_{t+1}^+) + \beta (x_t - x_{t-1})  \label{eq:acc-ppa-supp} \\
    x_{t+1} &= x_{t+1}^+ - \eta  \varepsilon_{t+1}. \label{eq:acc-stoc-ppa-supp}
\end{align}
We will refer to \eqref{eq:acc-ppa-supp} as PPAM (without stochastic errors). 

\begin{lemma} \label{lemma:pl-condition}
For $\mu$-strongly convex $f(\cdot),$ it holds that for all $x$,
\begin{align*}
    \| \nabla f(x) \|_2^2 \geq \mu^2 \|x - x^\star \|_2^2.
\end{align*}
\end{lemma}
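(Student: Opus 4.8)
The plan is to derive this directly from Assumption~\ref{assump:str-convex} together with the first-order optimality condition at the minimizer. Since $f$ is $\mu$-strongly convex, it has a unique minimizer $x^\star$, and since the problem in \eqref{eq:obj} is unconstrained, $\nabla f(x^\star) = 0$. First I would instantiate the strong convexity inequality at the pair $(x, x^\star)$: for all $x$,
\begin{align*}
    \langle \nabla f(x) - \nabla f(x^\star), x - x^\star \rangle \geq \mu \|x - x^\star\|_2^2,
\end{align*}
and then substitute $\nabla f(x^\star) = 0$ to obtain $\langle \nabla f(x), x - x^\star \rangle \geq \mu \|x - x^\star\|_2^2$.

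Next I would apply the Cauchy--Schwarz inequality to the left-hand side, which gives
\begin{align*}
    \|\nabla f(x)\|_2 \cdot \|x - x^\star\|_2 \geq \langle \nabla f(x), x - x^\star \rangle \geq \mu \|x - x^\star\|_2^2.
\end{align*}
If $x \neq x^\star$, dividing both sides by $\|x - x^\star\|_2 > 0$ yields $\|\nabla f(x)\|_2 \geq \mu \|x - x^\star\|_2$, and squaring gives the claim. If $x = x^\star$, both sides of the desired inequality vanish (using $\nabla f(x^\star) = 0$), so the bound holds trivially. This completes the argument.

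There is no real obstacle here: the only thing to be careful about is justifying $\nabla f(x^\star) = 0$ (existence and uniqueness of the minimizer for a strongly convex $f$ on $\mathbb{R}^p$, which is standard) and handling the degenerate case $x = x^\star$ so that the division step is legitimate. Everything else is a one-line application of Cauchy--Schwarz. This inequality is exactly the Polyak--Łojasiewicz-type bound (stated here in terms of distance to the optimum rather than function value gap), and it is used subsequently to control $\|\nabla f(x_{t+1}^+)\|_2$ in terms of $\|x_{t+1}^+ - x^\star\|_2$ when establishing the iteration-invariant bound of Theorem~\ref{thm:onestep-acc-stoc-prox}.
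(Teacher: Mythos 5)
Your proof is correct, but it takes a different route from the paper. You work directly with the monotonicity form of strong convexity --- which is literally how Assumption~\ref{assump:str-convex} is stated --- instantiate it at $(x, x^\star)$ with $\nabla f(x^\star) = 0$, and finish with Cauchy--Schwarz (plus the trivial $x = x^\star$ case). The paper instead starts from the function-value characterization $f(y) \geq f(x) + \nabla f(x)^\top (y-x) + \tfrac{\mu}{2}\|y-x\|_2^2$, minimizes both sides over $y$ to obtain the Polyak--\L{}ojasiewicz-type bound $\|\nabla f(x)\|_2^2 \geq 2\mu\left(f(x) - f(x^\star)\right)$, and then applies strong convexity a second time at $(x^\star, x)$ to convert the suboptimality gap into $\tfrac{\mu}{2}\|x - x^\star\|_2^2$. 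The two arguments arrive at the same constant $\mu^2$; yours is shorter and has the advantage of using the assumption exactly as stated (the paper's proof tacitly switches to the equivalent function-value form without deriving it), while the paper's route produces the PL inequality in terms of the function-value gap as an intermediate byproduct. Your handling of the degenerate case and of $\nabla f(x^\star) = 0$ (existence and uniqueness of the unconstrained minimizer under strong convexity) is appropriate and closes the only potential gap in the Cauchy--Schwarz division step.
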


\begin{proof}
By strong convexity, we have for all $x$ and $y$, 
\begin{align*}
    f(y) \geq f(x) + \nabla f(x)^\top (y-x) + \tfrac{\mu}{2} \|y-x\|_2^2.
\end{align*}
Since minimization retains inequality, we can minimize each side. On the left hand side, we have $\min_y \{ f(y) \} = f(x^\star).$ On the right hand side, we take the  derivative with respect to $y$ and set it to zero to obtain: 
\begin{align*}
    \nabla f(x) + \mu (y-x) = 0 \Rightarrow y = x - \tfrac{1}{\mu} \nabla f(x).
\end{align*}
Plugging back, we get:
\begin{align*}
    f(x^\star) &\geq f(x) + \nabla f(x)^\top (x - \tfrac{1}{\mu} \nabla f(x) - x) + \tfrac{\mu}{2} \|x - x + \tfrac{1}{\mu} \nabla f(x) \|_2^2 \\
    &=f(x)  - \tfrac{1}{2\mu} \| \nabla f(x) \|_2^2.
\end{align*}
Rearraging, we have
\begin{align*}
     \| \nabla f(x) \|_2^2 &\geq 2 \mu(f(x) - f(x^\star)) \\
     &\geq \mu^2 \|x - x^\star \|_2^2,
\end{align*}
where last inequality uses strong convexity with $y = x$ and $x=x^\star.$
\end{proof}

\subsection*{Proof of Theorem \ref{thm:onestep-acc-stoc-prox}}

\begin{proof}
From PPAM in \eqref{eq:acc-ppa-supp}, subtract $x^\star$ on both sides and take the norm squared:
\begin{align*}
    x_{t+1}^+ - x^\star &= x_t - x^\star - \eta \nabla f(x_{t+1}^+) + \beta (x_t - x_{t-1}) \Rightarrow\\
    \|x_{t+1}^+ - x^\star\|_2^2 &= \| x_t - x^\star \|_2^2 + \eta^2 \| \nabla f(x_{t+1}^+) \|_2^2 + \beta^2 \| x_t - x_{t-1}\|_2^2 \\ 
    & \quad - 2 \eta (x_t - x^\star)^\top \nabla f(x_{t+1}^+) \quad  \\
    & \quad + 2\beta(x_t-x^\star)^\top (x_t-x_{t-1}) \quad  \\
    & \quad - 2 \beta \eta (x_t - x_{t-1})^\top \nabla f(x_{t+1}^+).
\end{align*}
For the fourth term, observe that
\begin{align*}
    (x_t - x^\star)^\top \nabla f(x_{t+1}^+) &= (x_{t+1}^+ - x^\star + \eta \nabla f(x_{t+1}^+) - \beta(x_t - x_{t-1}))^\top \nabla f(x_{t+1}^+) \\
    &= (x_{t+1}^+ - x^\star)^\top \nabla f(x_{t+1}^+) + \eta \|\nabla f(x_{t+1}^+) \|_2^2 - \beta(x_t - x_{t-1})^\top \nabla f(x_{t+1}^+) \Rightarrow \\
    -2\eta(x_t-x^\star)^\top \nabla f(x_{t+1}^+) &= -2\eta (x_{t+1}^+ - x^\star)^\top \nabla f(x_{t+1}^+) -2\eta^2 \|\nabla f(x_{t+1}^+) \|_2^2 + 2  \beta \eta(x_t - x_{t-1})^\top \nabla f(x_{t+1}^+) \\
    &\leq -2\eta \mu \|x_{t+1}^+ - x^\star\|_2^2 -2\eta^2 \|\nabla f(x_{t+1}^+) \|_2^2 + 2 \beta \eta(x_t - x_{t-1})^\top \nabla f(x_{t+1}^+),
\end{align*}
where the last inequality uses strong convexity of $f(\cdot)$.

For the third and fifth term, observe that
\begin{align*}
    \beta^2 \| x_t - x_{t-1}\|_2^2 + 2\beta (x_t - x^\star)^\top (x_t - x_{t-1})
    \hspace{-4cm}\\
    &= 
    \|\beta(x_t - x_{t-1}) + (x_t -x^\star) \|_2^2 - \|x_t - x^\star \|_2^2  \\
    &= \|(1+\beta) x_t - \beta x_{t-1} - x^\star \|_2^2 - \|x_t - x^\star \|_2^2 \nonumber \\
    &= \|(1+\beta)(x_t - x^\star) - \beta (x_{t-1} -x^\star) \|_2^2 - \|x_t - x^\star \|_2^2  \\
    &\leq (1+\beta)^2(1+\zeta) \|x_t - x^\star\|_2^2 + \beta^2(1+\tfrac{1}{\zeta}) \|x_{t-1} - x^\star \|_2^2 -  \|x_t - x^\star \|_2^2,
\end{align*}
where last inequality uses Young's inequality: $\|a+b\|_2^2 \leq (1+\zeta)\|a\|_2^2 + (1+\tfrac{1}{\zeta}) \|b\|_2^2$ for $\zeta > 0$.
Combining terms, we have
\begin{align*}
    \|x_{t+1}^+ - x^\star\|_2^2   &\leq  \| x_t - x^\star \|_2^2 + \eta^2 \| \nabla f(x_{t+1}^+) \|_2^2  \nonumber \\
    & \qquad -2\eta \mu \|x_{t+1}^+ - x^\star\|_2^2 -2\eta^2 \|\nabla f(x_{t+1}^+) \|_2^2 + 2  \cancel{\beta \eta(x_t - x_{t-1})^\top \nabla f(x_{t+1}^+)}  \nonumber \\
    & \qquad + (1+\beta)^2(1+\zeta)\|x_t - x^\star\|_2^2 + \beta^2(1+\tfrac{1}{\zeta}) \|x_{t-1} - x^\star \|_2^2 - \|x_t - x^\star \|_2^2  \nonumber \\
    &\qquad - 2 \cancel{\beta \eta (x_t - x_{t-1})^\top \nabla f(x_{t+1}^+)} \\
    & = -2\eta \mu \|x_{t+1}^+ - x^\star\|_2^2 -\eta^2 \|\nabla f(x_{t+1}^+) \|_2^2+ (1+\beta)^2(1+\zeta)\|x_t - x^\star\|_2^2 \\ 
    &\qquad + \beta^2(1+\tfrac{1}{\zeta}) \|x_{t-1} - x^\star \|_2^2 \\
    &\leq - (2 \eta \mu + \eta^2 \mu^2) \|x_{t+1}^+ - x^\star\|_2^2 + (1+\beta)^2(1+\zeta)\|x_t - x^\star\|_2^2 \\ 
    &\qquad + \beta^2(1+\tfrac{1}{\zeta}) \|x_{t-1} - x^\star \|_2^2,
\end{align*}
where the last inequality is by Lemma~\ref{lemma:pl-condition}.
Grouping the same terms, we get:
\begin{align}
    (1+\eta \mu)^2 \|x_{t+1}^+ - x^\star\|_2^2 &\leq (1+\beta)^2(1+\zeta)\|x_t - x^\star\|_2^2 + \beta^2(1+\tfrac{1}{\zeta}) \|x_{t-1} - x^\star \|_2^2 \Rightarrow \nonumber \\
    \|x_{t+1}^+ - x^\star\|_2^2 &\leq  \frac{(1+\beta)^2(1+\zeta)}{(1+\eta \mu)^2} \|x_t - x^\star\|_2^2 + \frac{\beta^2(1+\tfrac{1}{\zeta})}{(1+\eta \mu)^2} \|x_{t-1} - x^\star \|_2^2. \label{eq:main-recursion}
\end{align}
Now, choose $\zeta = \frac{4}{(1+\beta)^2}-1$, which is positive for $0 < \beta < 1$. Then, each coefficient in the two terms on the RHS above reduces to:
\begin{align*}
    \frac{(1+\beta)^2(1+\zeta)}{(1+\eta \mu)^2} =  \frac{4}{(1+\eta \mu)^2}, \quad\text{and}\quad  \frac{\beta^2(1+\frac{1}{\zeta})}{(1+\eta \mu)^2} = \frac{4\beta^2}{(1+\eta \mu)^2(4 - (1+\beta)^2)}.
\end{align*}
Therefore, our original recursion in \eqref{eq:main-recursion} reduces to
\begin{align}
\label{eq:igdhb-onestep}
    \|x_{t+1}^+ - x^\star\|_2^2 &\leq  \frac{4}{(1+\eta \mu)^2} \|x_t - x^\star\|_2^2 +
    \frac{4\beta^2}{(1+\eta \mu)^2(4 - (1+\beta)^2)} \|x_{t-1} - x^\star \|_2^2.
\end{align}

Note that from \eqref{eq:acc-ppa-supp}, we have 
\begin{align*}
    x_{t+1}^+ +  \eta \nabla f(x_{t+1}^+) = x_t + \beta (x_t - x_{t-1}).
\end{align*}
Thus, $x_{t+1}^+$ is deterministic given $x_t$ and $x_{t-1}.$
Therefore, going back to SPPAM in \eqref{eq:acc-stoc-ppa-supp} and taking expectations, we have:
\begin{align*}
    \ex{ \|x_{t+1 }- x^\star\|_2^2 } &=  \ex{ \| x_{t+1}^+ - x^\star \|_2^2 } - 2\eta \ex{ \langle x_{t+1}^+ - x^\star, \varepsilon_{t+1} \rangle } + \eta^2 \ex{\| \varepsilon_{t+1} \|_2^2} \\
    &= \ex{ \| x_{t+1}^+ - x^\star \|_2^2 } + \eta^2 \ex{\| \varepsilon_{t+1} \|_2^2} \\
    &\leq \tfrac{4}{(1+\eta \mu)^2} \ex{ \|x_t - x^\star \|_2^2} + \tfrac{4\beta^2}{(1+\eta \mu)^2(4 - (1+\beta)^2)} \ex{ \|x_{t-1} - x^\star \|_2^2 } + \eta^2 \sigma^2, 
\end{align*}
where the last inequality follows from \eqref{eq:igdhb-onestep} and Assumption~\ref{assump:bounded-var}.

\end{proof}

\subsection*{Proof of Lemma \ref{lem:SPPAM-contraction}}

\begin{proof}
\eqref{eq:onestep-acc-stoc-prox} in the main text leads to the following $2 \times 2$ recursion: 
\begin{align}
\label{eq:twobytwo-system}
\begin{bmatrix}
  \ex{\|x_{t+1} - x^\star \|_2^2}  \\
  \ex{\|x_t - x^\star \|_2^2}
\end{bmatrix}
\leq 
\underbrace{
\begin{bmatrix}
  \tfrac{4}{(1+\eta \mu)^2} & \tfrac{4\beta^2}{(1+\eta \mu)^2(4 - (1+\beta)^2)} \\
  1 & 0
\end{bmatrix}
}_{:= A}
\cdot
\begin{bmatrix}
  \ex{\|x_t - x^\star \|_2^2} \\
  \ex{\|x_{t-1} - x^\star \|_2^2} 
\end{bmatrix}
+ 
\begin{bmatrix}
  \eta^2 \sigma^2 \\
  0
\end{bmatrix}.
\end{align}  

Eigenvalues of a $2 \times 2$ matrix $\begin{bmatrix}
  a & b \\ c& d
\end{bmatrix}$ is given by $\frac{(a+d) \pm \sqrt{(a+d)^2 - 4 (ad-bc)}}{2}.$
Thus, eigenvalues of the contraction matrix $A$ is given by 
\begin{align*}
    \sigma_{1, 2} &= \frac{4}{2(1+\eta \mu)^2} \pm \frac{1}{2}\cdot \sqrt{ \left( \frac{4}{(1+\eta \mu)^2} \right)^2 -4 \left( - \frac{4\beta^2}{(1+\eta \mu)^2(4-(1+\beta)^2)} \right) } \\
    &= \frac{2}{(1+\eta \mu)^2} \pm  \sqrt{  \frac{4}{(1+\eta \mu)^4}  + \frac{4\beta^2}{(1+\eta \mu)^2(4-(1+\beta)^2)}  } .
\end{align*}

Note that all terms are positive. Thus, the maximum eigenvalue is determined by 
\begin{align*}
    \sigma_1 = \frac{2}{(1+\eta \mu)^2} + \sqrt{  \frac{4}{(1+\eta \mu)^4}  + \frac{4\beta^2}{(1+\eta \mu)^2(4-(1+\beta)^2)}  } .
\end{align*}
\end{proof}

\subsection*{Proof of Corollary \ref{cor:acc-condition}}

\begin{proof}
We want to see under what condition the following holds:
\begin{align*}
     \frac{1}{1+2\eta \mu} &> \frac{2}{(1+\eta \mu)^2} + \sqrt{  \frac{4}{(1+\eta \mu)^4}  + \frac{4\beta^2}{(1+\eta \mu)^2(4-(1+\beta)^2)}  } \Rightarrow  \\
     \frac{1}{1+2\eta \mu} - \frac{2}{(1+\eta \mu)^2} &> \sqrt{  \frac{4}{(1+\eta \mu)^4}  + \frac{4\beta^2}{(1+\eta \mu)^2(4-(1+\beta)^2)}  } 
\end{align*}
Squaring both sides \footnote{Here, to square both sides and maintain the inequality, we assume $\eta \mu > 1$, which holds by the condition \eqref{eq:init-discount-condition}.} and grouping the same terms, we get 
\begin{align*}
    \left( \frac{1}{1+2\eta \mu} \right)^2 
    - \frac{4}{(1+\eta \mu)^2(1+2\eta \mu)} &>
    \frac{4\beta^2}{(1+\eta \mu)^2(4-(1+\beta)^2)} \Rightarrow \\
    \frac{\eta^2 \mu^2 - 6 \eta \mu - 3}{(1+2\eta \mu)^2} &>
    \frac{4\beta^2}{4-(1+\beta)^2}.
\end{align*}

\end{proof}

\subsection*{Proof of Theorem \ref{thm:lin-conv}}
\begin{proof}
Unrolling the recursion~\eqref{eq:twobytwo-system} for $T$ iterations, we get
\begin{small}
\begin{align}
\begin{bmatrix}
  \ex{\|x_T - x^\star \|_2^2}  \\
  \ex{\|x_{T-1} - x^\star \|_2^2} 
\end{bmatrix}
&\leq 
A^T
\cdot
\begin{bmatrix}
  \|x_0 - x^\star \|_2^2 \\
  \|x_{-1} - x^\star \|_2^2 
\end{bmatrix}
+ 
\left(
\sum_{i=1}^{T-1}A^i\right)
\begin{bmatrix}
  1 \\
  0
\end{bmatrix} 
\eta^2 \sigma^2 \nonumber \\
&= \left( \frac{\sigma_1^T -\sigma_2^T}{\sigma_1 - \sigma_2}A - \sigma_1\sigma_2 \frac{\sigma_1^{T-1} -\sigma_2^{T-1}}{\sigma_1 - \sigma_2}I \right)
\cdot
\begin{bmatrix}
  \|x_0 - x^\star \|_2^2 \\
  \|x_{-1} - x^\star \|_2^2 
\end{bmatrix}
+ 
\left(
\sum_{i=1}^{T-1}A^i\right)
\begin{bmatrix}
  1 \\
  0
\end{bmatrix} 
\eta^2 \sigma^2 \nonumber \\
&\leq  \frac{2 \sigma_1^T}{\sigma_1 - \sigma_2} (A + I) 
\cdot
\begin{bmatrix}
  \|x_0 - x^\star \|_2^2 \\
  \|x_{-1} - x^\star \|_2^2 
\end{bmatrix}
+ 
\left(
\sum_{i=1}^{T-1}A^i\right)
\begin{bmatrix}
  1 \\
  0
\end{bmatrix} 
\eta^2 \sigma^2, \label{eq:unrolled}
\end{align}  
\end{small}
where the last equality is using the formula in \cite{williams1992nth}, and the last inequality is due to 
\begin{small}
\begin{align*}
    \frac{\sigma_1^T - \sigma_2^T}{\sigma_1-\sigma_2} &\leq \frac{|\sigma_1|^T+|\sigma_2|^T}{\sigma_1-\sigma_2} \leq \frac{2|\sigma_1|^T}{\sigma_1-\sigma2} = \frac{2\sigma_1^T}{\sigma_1-\sigma2}, 
\end{align*}
\end{small}
and
\begin{small}
\begin{align*}
    -\sigma_1\sigma_2 \frac{\sigma_1^{T-1} -\sigma_2^{T-1}}{\sigma_1 - \sigma_2} &\leq 
    |\sigma_1 \sigma_2| \frac{|\sigma_1|^{T-1} +|\sigma_2|^{T-1}}{\sigma_1 - \sigma_2} \\
    &= \frac{|\sigma_2| \cdot |\sigma_1|^T - |\sigma_1|\cdot |\sigma_2|^T}{\sigma_1 - \sigma_2} 
    \leq \frac{|\sigma_1|^T  + |\sigma_1|^T}{\sigma_1 - \sigma_2} \leq \frac{2\sigma_1^T}{\sigma_1-\sigma_2},
\end{align*}
\end{small}
under the assumption that $|\sigma_{1,2}| < 1$, which we justified in Theorem~\ref{thm:init-discount-condition}.

Now, focusing on the error term, $\sum_{i=1}^{T-1} A^i$ converge to:
\begin{align*}
    \sum_{i=1}^{T-1} A^i = (I-A)^{-1} (I-A^T) := B (I-A^\top).
\end{align*}
Then, 
\begin{small}
\begin{align} \label{eq:error-term-bound}
    \left(
\sum_{i=1}^{T-1}A^i\right)
\begin{bmatrix}
  1 \\
  0
\end{bmatrix} 
\eta^2 \sigma^2 
&= 
(I-A)^{-1} (I-A^T)
\begin{bmatrix}
  1 \\
  0
\end{bmatrix} 
\eta^2 \sigma^2 \nonumber \\
&= -BA^T \begin{bmatrix}
  1 \\
  0
\end{bmatrix} 
\eta^2 \sigma^2 
+ 
B \begin{bmatrix}
  1 \\
  0
\end{bmatrix} 
\eta^2 \sigma^2 \nonumber \\ 
&\leq B \left( - \frac{\sigma_1^T - \sigma_2^T}{\sigma_1 - \sigma_2} A + \sigma_1 \sigma_2  \frac{\sigma_1^{T-1} - \sigma_2^{T-1}}{\sigma_1 - \sigma_2} I \right) +
B \begin{bmatrix}
  1 \\
  0
\end{bmatrix} 
\eta^2 \sigma^2
\nonumber \\
&\leq B \left( \frac{2\sigma_1^T}{\sigma_1 - \sigma_2} A + \frac{2\sigma_1^T}{\sigma_1 - \sigma_2} I \right) +
B \begin{bmatrix}
  1 \\
  0
\end{bmatrix} 
\eta^2 \sigma^2
\nonumber \\
&= 
\frac{2\sigma_1^T}{\sigma_1 - \sigma_2} B(A+I) 
\begin{bmatrix}
  1 \\
  0
\end{bmatrix} 
\eta^2 \sigma^2 + B \begin{bmatrix}
  1 \\
  0
\end{bmatrix} 
\eta^2 \sigma^2.
\end{align} 
\end{small}
Computing $(I-A)^{-1} := B$ term first, we get
\begin{small}
\begin{align*}
    (I-A)^{-1} &= 
    \left( 1 - \frac{4}{(1+\eta \mu)^2} - \frac{4\beta^2}{(1+\eta \mu)^2 (4-(1+\beta)^2)} \right)^{-1}
    \begin{bmatrix}
  1 & \frac{4\beta^2}{(1+\eta \mu)^2 (4-(1+\beta)^2)} \\
  1 &  1- \frac{4}{(1+\eta \mu)^2}
\end{bmatrix}\\
&:= \frac{1}{p-q} \cdot
  \begin{bmatrix}
  1 & q \\
  1 & p
\end{bmatrix}
\end{align*}
\end{small}
Then,
\begin{small}
\begin{align} \label{eq:coeff}
    B(A+I) \begin{bmatrix}
  1 \\
  0
\end{bmatrix} 
= \frac{1}{p-q} 
\begin{bmatrix}
  2-p+q \\
  2
\end{bmatrix} , \quad \text{and}\quad
B \begin{bmatrix}
  1 \\
  0
\end{bmatrix} 
= \frac{1}{p-q} 
\begin{bmatrix}
  1 \\
  1
\end{bmatrix}.
\end{align}
\end{small}
Combining \eqref{eq:unrolled}, \eqref{eq:error-term-bound}, and \eqref{eq:coeff}, we have
\begin{small}
\begin{align*}
\begin{bmatrix}
  \ex{\|x_T - x^\star \|_2^2}  \\
  \ex{\|x_{T-1} - x^\star \|_2^2} 
\end{bmatrix}
    &\leq  \frac{2 \sigma_1^T}{\sigma_1 - \sigma_2} (A + I) 
\cdot
\begin{bmatrix}
  \|x_0 - x^\star \|_2^2 \\
  \|x_{-1} - x^\star \|_2^2 
\end{bmatrix}
+ 
\left(
\sum_{i=1}^{T-1}A^i\right)
\begin{bmatrix}
  1 \\
  0
\end{bmatrix} 
\eta^2 \sigma^2 \\
&\leq  \frac{2 \sigma_1^T}{\sigma_1 - \sigma_2} 
\left( 
(A + I) 
\cdot
\begin{bmatrix}
  \|x_0 - x^\star \|_2^2 \\
  \|x_{-1} - x^\star \|_2^2 
\end{bmatrix}
+ 
\frac{1}{p-q}
\begin{bmatrix}
  2-p+q \\
  2
\end{bmatrix} \eta^2 \sigma^2 
\right)
+
\frac{1}{p-q} 
\begin{bmatrix}
  1\\1
\end{bmatrix} \eta^2 \sigma^2.
\end{align*}
\end{small}
Since we assume $x_0 = x_{-1}$, we have $\|x_0 - x^\star \|_2^2 = \|x_{-1} - x^\star \|_2^2$. Using this and computing $(A+I)$ explicitly, the top row results in:
\begin{small}
\begin{align*}
    \ex{\|x_T - x^\star \|_2^2} \leq \frac{2 \sigma_1^T}{\sigma_1 - \sigma_2} \left( \left( 2-p+q \right) \cdot  \left( \|x_0 - x^\star\|_2^2 + \frac{\eta^2\sigma^2}{p-q} \right) \right) + \frac{\eta^2 \sigma^2}{p-q}.
\end{align*}
\end{small}
Observe that 
\begin{align*}
     p - q &= 1 - \frac{4}{(1+\eta \mu)^2} - \frac{4\beta^2}{(1+\eta \mu)^2 (4-(1+\beta)^2)}  := 1 - \theta \Rightarrow  \\
     2 - p + q &= 1 + \theta,
\end{align*}
so we can write the above recursion as:
\begin{small}
\begin{align*}
    \ex{\|x_T - x^\star \|_2^2} \leq \frac{2 \sigma_1^T}{\sigma_1 - \sigma_2} \left( \left( 1+\theta \right) \cdot  \left( \|x_0 - x^\star\|_2^2 + \frac{\eta^2\sigma^2}{1-\theta} \right) \right) + \frac{\eta^2 \sigma^2}{1-\theta}.
\end{align*}
\end{small}
Thus, we see that the initial condition is discounted by the factor
\begin{small}
\begin{align*}
    \frac{2 \sigma_1^T}{\sigma_1 - \sigma_2}  =  \tau^{-1} 
    \cdot
    \left( \frac{2}{(1+\eta\mu)^2} + \tau \right)^T
\end{align*}
\end{small}
where 
$\tau = \sqrt{ \tfrac{4}{(1+\eta \mu)^4} + \tfrac{4\beta^2}{(1+\eta \mu)^2(4 - (1+\beta)^2)}},$
up to a region that depends on $O(\eta^2 \sigma^2).$
\end{proof}

\subsection*{Proof of Theorem \ref{thm:init-discount-condition}}

\begin{proof}
We want to analyze the term
\begin{align*}
    \frac{2 \sigma_1^T}{\sigma_1 - \sigma_2}  =
      \frac{\left( \frac{2}{(1+\eta\mu)^2} + \sqrt{ \tfrac{4}{(1+\eta \mu)^4} + \tfrac{4\beta^2}{(1+\eta \mu)^2(4 - (1+\beta)^2)}} \right)^T }{\sqrt{ \tfrac{4}{(1+\eta \mu)^4} + \tfrac{4\beta^2}{(1+\eta \mu)^2(4 - (1+\beta)^2)}}}.
\end{align*}
First, notice that $\frac{2}{(1+\eta\mu)^2} \leq \sqrt{ \tfrac{4}{(1+\eta \mu)^4} + \tfrac{4\beta^2}{(1+\eta \mu)^2(4 - (1+\beta)^2)}}.$ 
Thus, 
\begin{small}
\begin{align*}
    \frac{2 \sigma_1^T}{\sigma_1 - \sigma_2} 
    &\leq
    \frac{\left( 2 \sqrt{ \tfrac{4}{(1+\eta \mu)^4} + \tfrac{4\beta^2}{(1+\eta \mu)^2(4 - (1+\beta)^2)}} \right)^T }{\sqrt{ \tfrac{4}{(1+\eta \mu)^4} + \tfrac{4\beta^2}{(1+\eta \mu)^2(4 - (1+\beta)^2)}}} \\
    &= 2 \cdot \left( 2 \sqrt{ \tfrac{4}{(1+\eta \mu)^4} + \tfrac{4\beta^2}{(1+\eta \mu)^2(4 - (1+\beta)^2)}}  \right)^{T-1} .
\end{align*}
\end{small}
Therefore, if $2 \sqrt{ \tfrac{4}{(1+\eta \mu)^4} + \tfrac{4\beta^2}{(1+\eta \mu)^2(4 - (1+\beta)^2)}} < 1,$ we have exponential discount of the initial conditions. This condition leads to the desired result immediately. Also note that this condition justifies $|\sigma_{1,2}| < 1$, which we required in the proof of Theorem \ref{thm:lin-conv}.
\end{proof}

\section{Derivation of Algorithm \ref{alg:sppam-glm}}

In this section, we present the derivation of the procedure in Algorithm~\ref{alg:sppam-glm}. Note that the following derivation is based on implicit SGD procedure presented in \cite{toulis_statistical_2014}, extended to SPPAM.

We have 
\begin{align*}
  x_t &= x_{t-1} + \eta \nabla f_i(x_t) + \beta(x_{t-1} - x_{t-2}) \\
  &= x_{t-1} + \eta \left( b_{i_t} - h(x_t^\top a_{i_t}) \right) a_{i_t} + \beta(x_{t-1} - x_{t-2}),
\end{align*}
where in the last equality we substituted the gradient for GLM for a (uniformly sampled) single data point $(a_{i_t}, b_{i_t})$, with $h(\cdot)$ being the \textit{mean} function from the main text.

First, multiply both sides by $a_{i_t}$. Then,
\begin{align*}
    x_t^\top a_{i_t} &= x_{t-1}^\top a_{i_t} + \eta \left( b_{i_t} - h(x_t^\top a_{i_t}) \right) a_{i_t} ^\top a_{i_t} + \beta(x_{t-1} - x_{t-2})^\top a_{i_t}.
\end{align*}
Now let $\xi_t := \eta \left( b_{i_t} - h(x_t^\top a_{i_t}) \right) \in \mathbb{R}$.
Then, we have:
\begin{align*}
  x_t^\top a_{i_t} = \xi_t \|a_{i_t}\|_2^2 + (1+\beta) x_{t-1}^\top a_{i_t} - \beta x_{t-2}^\top a_{i_t}.
\end{align*} 
We now apply the transfer function $h(\cdot)$ on both sides to get:
\begin{equation}
\label{eq:derivation}
  h(x_t^\top a_{i_t}) = h\left( \xi_t \|a_{i_t}\|_2^2 + (1+\beta) x_{t-1}^\top a_{i_t} - \beta x_{t-2}^\top a_{i_t} \right).
\end{equation}
But from $\xi_t = \eta \left( b_{i_t} - h(x_t^\top a_{i_t}) \right)$, we can re-arrange to get:
\begin{equation*}
  h(x_t^\top a_{i_t}) = b_{i_t} - \frac{\xi_t}{\eta }.
\end{equation*}
Plugging this back into the left-hand side of \eqref{eq:derivation} and solving for $\xi_t$, we have:
\begin{align*}
  \xi_t &= \eta \left( y - h(\xi_t \|a_{i_t}\|_2^2 + (1+\beta) x_{t-1}^\top a_{i_t} - \beta x_{t-2}^\top a_{i_t}) \right) \\
  x_t &= (1+\beta)x_{t-1} + \xi_t a_{i_t} - \beta x_{t-2} = x_{t-1} + \xi_t a_{i_t} + \beta(x_{t-1} - x_{t-2}),
\end{align*}
arriving at Algorithm~\ref{alg:sppam-glm}. Note that this derivation is assuming a single data point is sampled, but the derivation for mini-batch version is straightforward.


\end{document}